\newtheorem{theorem}{Theorem}
\newtheorem{proposition}{Proposition}
\newtheorem{remark}{Remark}
\newtheorem{lemma}{Lemma}
\newtheorem{example}{Example}
\newtheorem{corollary}{Corollary}
\newtheorem{definition}{Definition}
\newtheorem{assumption}{Assumption}
\newenvironment{proof}{{{\bf Proof:}\mbox{ }}}{{\mbox{ }\hfill $\Box$}}
\newenvironment{algo}{{\bf Algorithm}\ttfamily\selectfont}{}
\newenvironment{keyword}{{\noindent{\bf keywords:}\mbox{ }}}{}
\newcommand{\e}{\mathbf{e}}
\newcommand{\V}{\mathcal{V}}
\newcommand{\E}{\mathcal{E}}
\newcommand{\G}{\mathcal{G}}
\newcommand{\M}{\mathcal{M}}
\newcommand{\R}{\ensuremath{\mathbb{R}}}
\newcommand{\N}{\mathbb{N}}
\newcommand{\so}{\ensuremath{\mathfrak{so}}}
\newcommand{\eps}{\varepsilon}
\newcommand{\vl}{vl}
\newcommand{\vr}{vr}
\newcommand{\ed}{\text{ed}}
\newcommand{\tightlist}{\setlength{\itemsep}{0cm} \setlength{\parskip}{0cm}}
\DeclareMathOperator{\tr}{tr}
\DeclareMathOperator{\sk}{skew}
\DeclareMathOperator{\grad}{grad}
\DeclareMathOperator{\st}{stab}
\DeclareMathOperator{\rk}{rank}
\DeclareMathOperator{\diag}{diag}
\DeclareMathOperator{\vect}{vec}
\DeclareMathOperator{\dist}{dist}
\newcommand{\refy}{reference~}
\newcommand{\news}{\sigma}
\begin{document}

\definecolor{gray}{rgb}{0.6,0.6,0.9}


\title{Synchronization with partial state coupling on $SO(n)$\footnote{This paper presents research results of the Belgian Network DYSCO (Dynamical Systems, Control, and Optimization), funded by the Interuniversity Attraction Poles Program, initiated by the Belgian State, Science Policy Office. The scientific responsibility rests with its authors.
\newline
A preliminary version of this work has been presented at the 48th IEEE Conference on Decision and Control, Shanghai, China, 2009~\cite{myCDC}.
\newline
The authors want to thank Prof.~Rodolphe Sepulchre for stimulating discussions.
}}

\author{A. Sarlette\footnote{Corresponding author. Systems and Modeling, Department of Electrical Engineering and Computer Science, University of Li\`ege, 4000 Li\`ege, Belgium. FNRS postdoctoral fellow; visiting at Centre Automatique et Syst\`emes, Mines ParisTech, Paris, France. e-mail: alain.sarlette@ulg.ac.be}~~and C. Lageman\footnote{Institut f\"ur Mathematik, Universit\"at W\"urzburg, 97070 W\"urzburg, Germany. e-mail: christian.lageman@mathematik.uni-wuerzburg.de}}

\maketitle

\begin{abstract}
This paper studies autonomous synchronization of $k$ agents whose states evolve on $SO(n)$, 
but which are only coupled through the action of their states on one ``\refy vector'' in $\R^n$ for each link. Thus each link conveys only partial state information at each time, and to reach synchronization agents must combine this information over time or throughout the network.
A natural gradient coupling law for synchronization is proposed. Extensive convergence analysis of the coupled agents is provided, both for fixed and time-varying \refy vectors.
The case of $SO(3)$ with fixed \refy vectors is discussed in more detail.
For comparison, we also treat the equivalent setting in $\R^n$, i.e.~with states in $\R^n$ and connected agents comparing scalar product of their states with a \refy vector.
\end{abstract}

\begin{keyword}
consensus, attitude synchronization, output coupling, special orthogonal group.
\end{keyword}

MSC classification: 93A14, 93C10, 93B27, 93D20, 34H15, 51F25


\section{Introduction}\label{sec:Intro}

Synchronization and collective phenomena have recently drawn considerable attention. Inspired by physical and natural systems
(see e.g.~\cite{Reynolds1,Sync,VICSEK}), the control community has developed an interest in self-coordination of multi-agent systems; applications include communication networks and vehicle formations
(see e.g.~\cite{BULLO,HendrickxPersistence,HOPFIELD,JADBABAIE,JandK,LEONARDgen,MCINNES,SujitThesis,TsitsiklisThesis}). 
The literature addresses aspects like optimal configuration, collision avoidance, nonlinear dynamics, interaction graph structure, distributed controller tuning, etc.

An important distinction must be made between \emph{decentralized} or \emph{distributed feedback control}, where controllers with (sometimes implicit) reference inputs are designed for a set of coupled subsystems \cite{DahlehSI,SiljakBook}, and \emph{autonomous coordination}, where systems evolve independently of any external command and a collective behavior emerges from their coupling only. Our work belongs to the latter framework. 

A fundamental autonomous collective behavior, called ``synchronization'' or ``consensus'', is that agents connected by a restricted set of pairwise interaction links reach agreement on a common state value.
This problem has been studied mainly on basis of a standard ``linear consensus algorithm'' for states in $\mathbb{R}^n$, see e.g.~\cite{JADBABAIE,olfati,TsitsiklisThesis}. A more general convexity argument is proposed by~\cite{MOREAU}. 
In~\cite{ss:08}, consensus is extended to nonlinear manifolds, motivated by problems involving e.g.~data on the sphere, agents representing subspaces, or rigid body attitudes. Synchronization of satellite attitudes, evolving on the space 
$SO(3)$ of rotation matrices, has attracted much attention,
both in presence of a reference or leader~\cite{NorwayLeaderFollower,SO3book,BeardOnSats,VanDyke1} and in fully cooperative (i.e. leader- and reference-less) framework~\cite{SujitThesis,WRen1,MY7}, mostly with second-order mechanical dynamics.
All these papers consider \emph{full state exchange}\footnote{Or at least full configuration exchange, for second-order systems.} between interacting agents. Consensus with partial state coupling in linear systems is studied in \cite{ScardoviLinearSyst,Emre08}: 
agents are coupled by linear outputs that are essentially equivalent to the projection of the state on \emph{the same subspace} for all pairs of interacting agents and for all time;
state agreement is reached thanks to detectable zero-input agent dynamics.

The present paper introduces two innovations in this regard.
First, it studies synchronization with \emph{partial state coupling on the Lie group $SO(n)$}: a basic coupling law for synchronization on $SO(n)$ (see~\cite{ss:08}) is extended to a setting where information coupling the agents only contains the action of their state on a ``\refy vector'' in $\R^n$. This output map is inspired by~\cite{piovan:08}; however~\cite{piovan:08}, to focus on noise reduction, only studies the case of $SO(2)$ --- that is the circle --- where the state is fully determined by a scalar output value.
Second, we consider a setting where agents' zero-input dynamics vanish, but partial state information involves an (a priori) 
\emph{different \refy vector for each interaction link}.
Full state synchronization is then recovered by partial state exchanges through the whole network.
For comparison, corresponding results for states evolving on $\R^n$ are also given. 
The main goals of the paper are
\begin{itemize}
	\item for practical purposes, to propose results allowing synchronization of e.g.~satellite attitudes with reduced information transfer;
	\item from a theoretical viewpoint, to illustrate implications of exchanging \emph{different} pieces of information along different links in coordinating networks, in particular for a nonlinear state space --- the Lie group $SO(n)$.
Unlike what first intuition might suggest, the fact that each agent shares information about its whole state with the network is not sufficient for synchronization.
\end{itemize}

Contributions are presented as follows. Section~\ref{sec:statement} formalizes the problem, proposes a natural cost function and derives a gradient law on $SO(n)$ that couples the agents via available information.
The analog problem on $\R^n$ is also introduced. 
Section~\ref{sec:2agents} starts the convergence analysis with 2 agents; this single-link case admits a complete analysis and mirrors the behavior of tracking or observer algorithms. With a varying \refy vector, states synchronize under persistent excitation: 
the ``averaged output map'' gives access to full state. A single fixed \refy vector is not sufficient to synchronize the states.
Section~\ref{sec:fixed} presents a detailed analysis for networks of $k>2$ agents.
Section~\ref{ssec:timvar} gives the result for time-varying \refy vectors, adding a technical assumption w.r.t.~the $2$-agent case. Local state synchronization for fixed \refy vectors is characterized in Section~\ref{ssec:fix}. We give (i) a necessary condition in terms of individual agent properties, (ii) a necessary and sufficient condition on $\R^n$ and a sufficient condition on $SO(n)$ in terms of the rank of a generalized Laplacian matrix including the effect of \refy vectors, and (iii) a tighter sufficient condition on $SO(n)$ which involves a matrix of size $k n^2$ for $k$ agents. Several (counter-)examples are given; in particular we show that, unlike with full state exchange, there are locally stable equilibria different from state synchronization even for an all-to-all interaction graph. 
Section~\ref{sec:SO3} studies $SO(3)$ with fixed \refy vectors in more detail, and proposes an algorithm to check if a network structure satisfies a sufficient condition for output synchronization and state synchronization to coincide under generic \refy vectors.
We also draw attention to poor robustness of the setting where \refy vectors correspond to relative positions of the agents in $\R^3$.
Simulations are provided for illustration.


\subsection{Notation} \label{sec:notation}
$I_n$ is the identity matrix in $\R^{n \times n}$. 
$X^T$ is the matrix transpose of $X\in\R^{n\times m}$ and, for $m=n$, $\sk(X)=\frac{1}{2}(X-X^T)$. $S^{n-1}$ is the unit sphere in $\R^n$. 
The Euclidean norm of column-vector $y \in \R^n$ is denoted  $\Vert y \Vert_2 := \sqrt{y^T y}$; 
The Frobenius norm of $M \in \R^{n \times n}$ with entries $m_{ij}$ is $ \Vert M \Vert_F := \sqrt{\tr(M^T M)} = \sqrt{\sum_{ij} \; m_{ij}^2}$. 
For $X \in \R^{n \times m}$ 
with column vectors $x_1, x_2,\ldots,x_m \in \R^n$, we write $\rk(X)=\rk(x_1,\, x_2,\,\ldots,\, x_m)$ its rank and $\vect(X) \in \R^{nm}$ the vector obtained by stacking $x_1, x_2,\ldots, x_m$ in a single column.
For a set $S\subset (\R^{n\times n})^k$ we denote $\vect(S) = \{\vect(X_1^T,\ldots,X_k^T)^T : (X_1,\ldots,X_k)\in S\} \subset \R^{kn^2}$.
Vector product of $x_1, x_2 \in \R^3$ is $x_1 \times x_2 \in \R^3$. Kronecker product of matrices is denoted $\otimes$.

$SO(n) \cong \{ Q \in \R^{n \times n} \, : \, Q^T Q = I_n , \, \det(Q)=1 \}$ is the Lie group of $n$-dimensional rotations; $\so(n) \cong \{ X \in \R^{n \times n} \, : \, X^T = -X \}$ is the Lie algebra of $SO(n)$, that is the tangent space to $SO(n)$ at identity.
We consider throughout the paper $SO(n)$ equipped with the standard biinvariant Riemannian metric from its embedding in $\R^{n\times n}$, 
i.e. $\langle Q\Omega, Q \Theta\rangle = \tr(\Omega^T\Theta)$, $\{Q\Omega,Q\Theta\}\subset T_Q SO(n)$, $\{\Omega,\Theta\}\subset\so(n)$.
$SO(n)$ acts on $\R^n$ by matrix-vector multiplication $(Q,y)\mapsto Qy$.
The stabilizer of $y \in \R^n$ with respect to this action is $\st(y) = \{Q \in SO(n) : Q y = y \}$. For $y\neq 0$ this stabilizer is a subgroup homomorphic to $SO(n-1)$; in particular, $Q \in \st{y}$ if and only if $Q^T \in \st{y}$.

An undirected (finite) graph $\G(\V,\E)$
consists of $\V$ a finite set of vertices  
--- in the following representing the agents ---
and $\E$ a set of unordered vertex pairs called edges --- representing the undirected interaction links among agents. 
We denote by $\#\E$ the cardinality of $\E$,
and following a customary abuse of notation we write $(i,j)$ to actually represent the unordered pair $\{ i,j\}$.
$\G(\V,\E)$ can be represented by its adjacency matrix $A=(a_{ij})$ where $a_{ij}= 1$ if $(i,j) \in \E$  
and $a_{ij}=0$ otherwise. In particular, $a_{ij}=a_{ji}$ for all $i,j$, i.e.~$A$ is symmetric.
We consider $\G$ without self-loops (nodes between a vertex and itself), so $a_{ii}=0$.
In the following, vertices (i.e.~agents) are represented by integers: $\V=\{ 1,\ldots,k\}$.
Vertices $i$, $j$ linked by edge $e=(i,j)\in \E$ are denoted $i=\vl(e)\in \V$ and $j=\vr(e) \in \V$, with $i<j$.
The set of edges attached to a vertex $i$ is denoted $\ed(i)$.
The degree $\deg(i)$ of vertex $i$ is the cardinality of $\ed(i)$.


\section{Problem statement and coupling law}\label{sec:statement}


\subsection{Partial state coupling on $SO(n)$}

Consider $k$ agents with states $Q_i \in SO(n)$ for $i=1,2,\ldots,k$,
$n \geq 2$, e.g. representing orientations of $3$-dimensional rigid bodies for $n=3$.
The agents evolve according to
\begin{equation}\label{eq:dyn}
\tfrac{d}{dt}Q_i = Q_i \, u_i  \quad , \quad i=1,2,\ldots,k \, ,
\end{equation}
where $u_i \in \so(n)$ is agent $i$'s control input. 
This left-invariant dynamics has a physical interpretation: control inputs are angular velocities expressed in respective body frames. 
The goal is to reach \emph{state synchronization} by just appropriately \emph{coupling} agents through $u_i$.
\begin{definition}
State synchronization set $C_s$ is the set of states such that all agents have the same state, i.e.
$C_s := \{(Q_1,Q_2,\ldots,Q_k) \in (SO(n))^k \, : \, Q_1=Q_2=\ldots=Q_k\}$. 
\end{definition}

The present paper considers the following restrictions on the $u_i$.
\begin{itemize}[topsep=0mm, parsep=0mm, itemsep=0mm]
\item[\textbf{(R1)}] \emph{Network coupling:} admissible pairwise interactions are represented by an undirected graph $\G(\V,\E)$ such that $u_i$ only uses information from agents $j$ for which $(i,j) \in \E$.
\item[\textbf{(R2)}] \emph{Partial state coupling:} interacting agents $i$ and $j$ exchange outputs
\begin{equation}\label{OutputMap}
Q_i^T y_{ij} \text{ and } Q_j^T y_{ij}
\end{equation}
with \refy vectors $y_{ij} = y_{ji} \in S^{n-1}$ given but a priori unknown.\vspace{2mm}
\end{itemize}
R1 is classical in consensus problems. Our main originality is R2; see the introduction for a discussion of related work. A physical interpretation of (\ref{OutputMap}) is, see Fig.\ref{fig:OutputMap}: $i$ and $j$ communicate to each other the coordinates in their respective body frames of direction $y_{ij}$. One could e.g.~imagine satellites comparing the position of some star on their cameras or how they see each other; in the latter case, $y_{ij}$ would be their relative position vector in inertial frame and $Q_i^T y_{ij}$ expresses it in the body frame of satellite $i$.
\begin{definition}
Output synchronization set $C_o$ is the set of states such that both outputs are equal on each link, i.e.~$C_o := \{(Q_1,Q_2,\ldots,Q_k) \in (SO(n))^k \, : \,  Q_i^T y_{ij} = Q_j^T y_{ij}$ $\forall (i,j) \in \E \}$.
\end{definition}
Clearly, $C_s \subseteq C_o$. The main question, investigated in this paper, is under which conditions output synchronization implies state synchronization. Both fixed and time-varying $y_{ij}$ are considered.
The challenge starts for $n > 2$: for $n=2$, as studied in~\cite{piovan:08}, $SO(2)$ is isomorphic to the circle and (\ref{OutputMap}) is equivalent to full state information. Synchronization on manifolds with full state coupling is analyzed e.g.~in \cite{MYthesis,ss:08}.

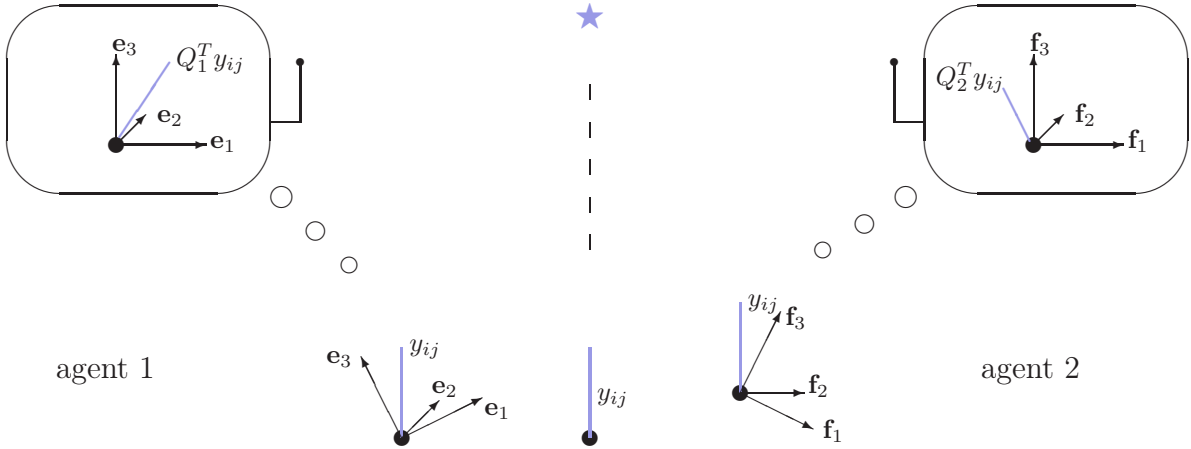
\begin{figure}[tbh]
	\setlength{\unitlength}{1mm}
	\begin{picture}(160,60)
	    \thinlines
		\put(80,0){\circle*{2}}
		\multiput(80,25)(0,5){5}{\line(0,1){2}}
		\thicklines
		\put(80,0){{\color{gray}\line(0,1){12}}}
		\thinlines
		\put(78,55){{\color{gray}$\bigstar$}}
		\put(81,5){{\footnotesize$y_{ij}$}}
		\multiput(20,45)(122,0){2}{\oval(35,25)}
		\multiput(41.5,42)(79,0){2}{\line(0,1){8}}
		\multiput(37.5,42)(83,0){2}{\line(1,0){4}}
		\multiput(41.5,50)(79,0){2}{\circle*{1}}
		\multiput(17,39)(122,0){2}{\vector(1,0){12}}
		\put(29.5,38.5){{\footnotesize$\mathbf{e}_1$}}
		\put(151.5,38.5){{\footnotesize$\mathbf{f}_1$}}
		\multiput(17,39)(122,0){2}{\vector(0,1){12}}
		\put(16.5,51.5){{\footnotesize$\mathbf{e}_3$}}
		\put(138.5,51.5){{\footnotesize$\mathbf{f}_3$}}
		\multiput(17,39)(122,0){2}{\circle*{2}}
		\multiput(17,39)(122,0){2}{\vector(1,1){4}}
		\put(22.5,42){{\footnotesize$\mathbf{e}_2$}}
		\put(144.5,42){{\footnotesize$\mathbf{f}_2$}}
		\put(9,8){\mbox{agent 1}}
		\put(55,0){\circle*{2}}
		\thicklines
		\put(55,0){{\color{gray}\line(0,1){12}}}
		\put(56,11.5){{\footnotesize$y_{ij}$}}
		\thinlines
		\put(55,0){\vector(2,1){10.7}}
		\put(55,0){\vector(-1,2){5.4}}
		\put(55,0){\vector(1,1){5}}
		\put(66,3){{\footnotesize$\mathbf{e}_1$}}
		\put(45,10){{\footnotesize$\mathbf{e}_3$}}
		\put(59,6){{\footnotesize$\mathbf{e}_2$}}
		\put(48,23){\circle{2}}
		\put(43.5,27.5){\circle{2.4}}
		\put(39,32){\circle{2.8}}
		\thicklines
		\put(17.5,40){{\color{gray}\line(2,3){6.6}}}
		\thinlines
		\put(25,49.5){{{\footnotesize$Q_1^T y_{ij}$}}}
		\put(132,8){\mbox{agent 2}}
		\put(100,6){\circle*{2}}
		\thicklines
		\put(100,6){{\color{gray}\line(0,1){12}}}
		\put(101,17.5){{\footnotesize$y_{ij}$}}
		\thinlines
		\put(100,6){\vector(2,-1){9.7}}
		\put(100,6){\vector(1,2){5.4}}
		\put(100,6){\vector(1,0){8.5}}
		\put(111,0){{\footnotesize$\mathbf{f}_1$}}
		\put(106,15){{\footnotesize$\mathbf{f}_3$}}
		\put(109,6){{\footnotesize$\mathbf{f}_2$}}
		\put(111,25){\circle{2}}
		\put(116.5,28.5){\circle{2.4}}
		\put(122,32){\circle{2.8}}
		\thicklines
		\put(138.5,39.5){{\color{gray}\line(-1,2){3.5}}}
		\thinlines
		\put(126,47){{{\footnotesize$Q_2^T y_{ij}$}}}				
	\end{picture}
	\caption{Interpretation of output map (\ref{OutputMap}) for $SO(3)$. Bottom center: two agents see a star in direction $y_{ij}$. Upper side panels: exchanged outputs $=$ expression of $y_{ij}$ in the respective body frames. Orthonormal frames ($\mathbf{e}_1,\mathbf{e}_2,\mathbf{e}_3$), ($\mathbf{f}_1,\mathbf{f}_2,\mathbf{f}_3$) indicate agent orientations.}\label{fig:OutputMap}
\end{figure}


\subsection{Gradient law on $SO(n)$} \label{ss:debut}

A gradient law for (at least local) synchronization on $SO(n)$ with \emph{full state coupling} is proposed in \cite{ss:08}. It expresses state disagreement by cost function
\begin{equation}\label{FullCost}
f_s = \frac{1}{2}\sum_{(i,j) \in \E} \, \Vert Q_i - Q_j \Vert_F^2 =  \sum_{(i,j) \in \E} \, (n-\tr(Q_i^TQ_j))
\end{equation}
and makes agents ``rotate towards each other'' according to
\begin{equation}\label{FullDyns}
\tfrac{d}{dt} Q_i = -\grad_{Q_i} (f_s) = Q_i \sum_{\{j : (i,j) \in \E\}} \, \sk(Q_i^T Q_j) \quad , \quad i=1,2,\ldots,k \, .
\end{equation}
The gradient is computed w.r.t.~the product metric on $SO(n)^k$ 
of the biinvariant Riemannian metric for $SO(n)$ embedded in $\mathbb{R}^{n \times n}$, see Section~\ref{sec:notation}. 
Algorithm (\ref{FullDyns}) requires agents to compare relative states, $Q_i^T Q_j$. 
For agents only comparing outputs (\ref{OutputMap}), we propose a gradient law 
which minimizes the output disagreement cost function
\begin{equation}\label{PartialCost}
f_o = \frac{1}{2}\sum_{(i,j) \in \E} \, \Vert Q_i^T y_{ij} - Q_j^T y_{ij} \Vert_2^2 = \sum_{(i,j) \in \E} \, (1-\tr(Q_i^T M_{ij} Q_j))
\end{equation}
where $M_{ij} := y_{ij} y_{ij}^T$ is a rank $1$ projector. This yields a
coupling law satisfying R1 and R2: 
\begin{equation}\label{alg:all}
\tfrac{d}{dt} Q_i = -\grad_{Q_i} (f_o) = \;\; Q_i \sum_{\{j : (i,j) \in \E\}} \, \sk(Q_i^T M_{ij} Q_j) \quad , \quad i=1,2,\ldots,k \, .
\end{equation}
Both dynamics (\ref{FullDyns}) and (\ref{alg:all}) are right-invariant w.r.t.~absolute orientation: if $(Q_1(t),\ldots,Q_k(t))$ is a solution then $(Q_1(t)R,\ldots,Q_k(t)R)$ is also a solution, for any constant $R \in SO(n)$.

\begin{remark}
We consider first-order dynamics for simplicity. Gradient laws are a basic tool that can be extended to higher-order systems, e.g.~applying force/torque that is the gradient of a potential in second-order mechanical systems; see e.g.~\cite{MYthesis} for extending (\ref{FullDyns}) to mechanical systems with Euler equation dynamics.
\end{remark}
\begin{remark}
Dynamics (\ref{alg:all}) can also be mapped to the space of outputs and viewed as a gradient in the submanifold of $M := (\, S^{n-1} \,)^{2\#\E}$ given by $N:=\{(r_{e_1},\ldots,r_{e_{\#\E}},s_{e_1},\ldots,s_{e_{\#\E}}) \colon  r_{e} = Q_{\vl(e)}^T y_e \text{ and } s_{e} = Q_{\vr(e)}^T y_e \text{ for some } (Q_1,Q_2,\ldots,Q_k) \in (SO(n))^k \, , \; \forall e\in \E \}$. 
The corresponding dynamics on $N$ extend naturally to gradient dynamics of 
$h_o = \sum_{e\in\mathcal E} \Vert r_e-s_e \Vert_2^2$ on $M$ for a suitable 
Riemannian metric. Explicitly, (\ref{alg:all}) on the output space translates to
$$
\tfrac{d}{dt}r_e  =\;  - \sum_{j \in \ed(\vr(e))}\sk(r_j \; s_j^T)\,r_e \quad \text{and} \quad
\tfrac{d}{dt}s_e  =\;  - \sum_{j \in \ed(\vl(e))}\sk(s_j \; r_j^T)\,s_e \quad \text{for } e \in \E \, .
$$
The appearance of $r_j \, s_j^T$, with $j\neq e$, in the gradient of $\|r_e-s_e\|_2^2$ expresses inter-dependence of the different outputs $r_j,s_j$ that correspond to the same state $Q_i$; this inter-dependence would be encoded in the geometry on $N$.
In this interpretation, output synchronization corresponds to the set
$N \, \bigcap \, \{(r_{e_1},\ldots,r_{e_{\#\E}},r_{e_1},\ldots,r_{e_{\#\E}}) : r_{e_j}\in S^{n-1}\}$.
Examining this intersection seems to be a challenging geometric problem. 
\end{remark}


\subsection{Gradient law with partial state coupling on $\R^n$}

An analog setting with agents on $\R^n$ will be simpler to investigate and serve for comparison in further sections. Consider $k$ agents with states $x_i \in \R^n$ for $i=1,2,\ldots,k$, e.g.~representing positions of agents in $\R^n$, $n \geq 1$. 
They evolve according to
$\tfrac{d}{dt}x_i = u_i$,
where $u_i \in \R^n$ must couple the agents to reach state synchronization $x_i = x_j$ $\forall i,j$. 
The coupling is subject to restrictions \textbf{(R1)} and \textbf{(R2)}: the network constraint remains unchanged; partial state coupling is imposed by comparing, for each edge $(i,j)$, the projection of $x_i$ and $x_j$ on some vector $y_{ij} \in S^{n-1} \subset \R^n$, i.e.~communicated outputs are $x_i^T y_{ij}$ and $x_j^T y_{ij}$. So information exchanged along one edge is a scalar. An extension to projections onto subspaces, with $Y_{ij} \in \R^{n \times m}$, could also be considered. The natural cost function for this setting is 
$$g_o \; = \; \frac{1}{2}\sum_{(i,j) \in \E} \; (x_i^T y_{ij} - x_j^T y_{ij})^2 \; = \; \frac{1}{2}\sum_{(i,j) \in \E} \; (x_i-x_j)^T \, M_{ij} \, (x_i-x_j)$$
with $M_{ij}=y_{ij} y_{ij}^T$ defined as in Section \ref{ss:debut}. The ensuing gradient law writes
\begin{equation}\label{eq:RnCons}
\tfrac{d}{dt} x_i = \sum_{\{j : (i,j) \in \E\}} \, M_{ij} \, (x_j  -x_i) \quad, \quad i=1,2,\ldots,k \, .
\end{equation}
One term of (\ref{eq:RnCons}) drives the components along direction $y_{ij}$ of $x_i$ and $x_j$ towards each other, leaving the rest of the state unchanged.
The idea is that, by combining information along different directions $y_{ij}$ of different edges, synchronization can be achieved on the whole state vector. This setting essentially differs from previous work \cite{Emre08,ScardoviLinearSyst}, where state-to-output map is the same for all edges and state synchronization relies on zero-input dynamics $\tfrac{d}{dt} x_i = A x_i$ making an individual system detectable from its single output.


\section{Convergence for two agents}\label{sec:2agents}

For $k=2$ agents, there is a single edge $(1,2)$ with \refy vector $y_{12}$.
Moreover, invariance w.r.t.~absolute orientation (or position) allows to reduce the state of the system to the relative state of one agent with respect to the other one. Convergence properties on $SO(n)$ and on $\R^n$ are similar: for fixed $y_{12}$, synchronization appears only along the space on which information is projected; with varying $y_{12}$ full state synchronization is possible.\vspace{2mm}

On $SO(n)$, defining $Q = Q_1 Q_2^T$ leads to reduced dynamics
\begin{eqnarray}
\label{alg:2full} \text{\emph{full state:}} \quad \tfrac{d}{dt} Q & = & 2 Q \sk(Q^T)\\
\label{alg:2partial} \text{\emph{partial state:}} \quad \tfrac{d}{dt} Q & = &  2Q \sk(Q^T M_{12}) \, .
\end{eqnarray}
These expressions are also valid for a directed link, that is when e.g.~$Q_1$ follows (\ref{alg:all}) and $Q_2$ remains static. Therefore, (\ref{alg:2full}) or (\ref{alg:2partial}) are also building blocks for tracking / observer algorithms: if an external pilot imposes $\tfrac{d}{dt}Q_2(t)=Q_2\,\Omega(t)$ with known input $\Omega(t) \in \so(n)$, then (\ref{alg:2partial}) remains valid if $\tfrac{d}{dt}Q_1 = Q_1 \, (\, \sk(Q_1^T M_{12}Q_2) + \Omega(t) \,)$, see also \cite{bonnabel09:LieObservers,LTM10,2009:SO3:CDC}.

Dynamics (\ref{alg:2full}) and~(\ref{alg:2partial}) implement gradient descent for cost functions $\hat{f}_s(Q) =  \Vert Q^T  - I_n \Vert_F^2$ and $\hat{f}_o(Q) = \Vert Q^T y_{12} - y_{12} \Vert_2^2$ respectively. In terms of $Q$, state and output synchronization map to $\hat{C}_s = \{ I_n \}$ and $\hat{C}_o = \{ Q : \hat{f}_o(Q) = 0 \,\} = \st(y_{12})$ respectively. For~(\ref{alg:2full}), $Q = I_n$ is the only stable equilibrum, see~\cite{ss:08}. For~(\ref{alg:2partial}) the system behaves as follows.

\begin{proposition}\label{prop:2agents}
Consider the dynamics (\ref{alg:2partial}) on $SO(n)$ with $n>2$. 
\newline (a) Let $Z_y = \{ Q : Q y_{12} = - y_{12}\}$ the set of maxima of $\hat{f}_o$.
If $y_{12}$ is constant, then all trajectories with initial condition $Q(0) \in SO(n) \setminus Z_y$ 
asymptotically converge to $\hat{C}_o$; convergence is locally exponential. 
Only initial points in
$S_0 = 
\{ I_n + (\cos(\theta)$-$1)\, (y_{12} y_{12}^T \! + \! z z^T) + \sin(\theta) \, (y_{12} z^T \! - \! z y_{12}^T) \, : \, z \in S^{n-1},\, z^T y_{12} = 0,\; \theta \in (-\pi,\pi) \}$
converge to $\hat{C}_s$.
\newline (b) If $y_{12}(t)$ is a time-varying smooth function, $\tfrac{d}{dt}y_{12}$ is uniformly bounded and there exist $T, \alpha > 0$ 
such that $\tfrac{1}{T}\int_t^{t+T} M_{12}(\tau) d\tau - \alpha I_n$ is positive definite for all $t>0$, 
then $\hat{C}_s$ is the only asymptotically stable limit set and attracts (at least) all solutions starting in $SO(n) \setminus S_1$, where $S_1 = \{Q\in SO(n): \hat f_s(Q) \ge 2 \}$.
\end{proposition}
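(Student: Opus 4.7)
For part (a), the dynamics (\ref{alg:2partial}) is gradient descent for $\hat f_o(Q)=2-2y_{12}^T Q^T y_{12}$ on the compact manifold $SO(n)$. Critical points, characterised by $\sk(Q^T M_{12})=0$, i.e.\ $Q^T y_{12}$ collinear with $y_{12}$, split into $\hat C_o$ (minima, $\hat f_o=0$) and $Z_y$ (maxima, $\hat f_o=4$). Strict decrease of $\hat f_o$ off the critical set together with the value gap show that any $Q_0$ with $\hat f_o(Q_0)<4$ (equivalently $Q_0\notin Z_y$) must accumulate on $\hat C_o$. Expanding $Q=Q^\star e^X$ around $Q^\star\in\hat C_o$ and using $Q^\star y_{12}=y_{12}$, one finds that the Hessian of $\hat f_o$ at $Q^\star$ is the quadratic form $X\mapsto \Vert Xy_{12}\Vert_2^2$: it vanishes exactly on the tangent space $\{X\in\so(n):Xy_{12}=0\}$ to $\hat C_o$ and is positive definite on the orthogonal complement. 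Thus $\hat f_o$ is Morse--Bott; each trajectory converges to a single point of $\hat C_o$, and the normal linearisation gives local exponential contraction.

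To single out which initial conditions converge to $\hat C_s=\{I_n\}$, I project the system through the principal $\st(y_{12})$-bundle $\pi\colon SO(n)\to S^{n-1}$, $\pi(Q)=Q^T y_{12}$. A direct computation yields the reduced flow $\dot v=\tfrac12(y_{12}-(y_{12}^T v)v)$ for $v=Q^T y_{12}$, a gradient flow on $S^{n-1}$ attracting every $v_0\neq -y_{12}$ to $y_{12}$. Since $\sk(Q^T M_{12})=\tfrac12(vy_{12}^T-y_{12}v^T)$ is orthogonal (in the bi-invariant metric) to the vertical directions $\{XQ : Xy_{12}=0\}$, the vector field (\ref{alg:2partial}) is the horizontal lift of this reduced flow, and horizontal lifts are equivariant under the left action of $\st(y_{12})$. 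Any $Q_0\notin Z_y$ factors uniquely as $Q_0=S\,R_0$ with $S\in\st(y_{12})$ and $R_0\in S_0$ such that $R_0^T y_{12}=Q_0^T y_{12}$; substituting the $S_0$-parametrisation into (\ref{alg:2partial}) yields the scalar equation $\dot\theta=-\sin\theta$, so $R_0(t)\to I_n$ and, by equivariance, $Q(t)\to S$. The limit equals $I_n$ exactly when $S=I_n$, i.e.\ exactly when $Q_0\in S_0$.

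For part (b), I use $\hat f_s(Q)=2n-2\tr(Q)$ as a Lyapunov function. Rewriting (\ref{alg:2partial}) as $\dot Q=M_{12}(t)-Q\,M_{12}(t)\,Q$, a direct calculation gives
\[
\tfrac{d}{dt}\hat f_s \;=\; -2\bigl(1-y_{12}(t)^T Q^2 y_{12}(t)\bigr) \;=\; -\Vert Q(t)y_{12}(t)-Q(t)^T y_{12}(t)\Vert_2^2 \;\le\;0,
\]
which is monotone non-increasing whatever the time profile of $y_{12}$; in particular $\{\hat f_s<2\}=SO(n)\setminus S_1$ is forward-invariant. Boundedness of $\dot y_{12}$ and compactness of $SO(n)$ make $\ddot{\hat f_s}$ uniformly bounded, so Barbalat gives $\Vert Q(t)^2 y_{12}(t)-y_{12}(t)\Vert_2\to 0$. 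The main obstacle is to promote this directional statement to convergence of $Q$ itself. For any sequence $t_n\to\infty$, Arzela--Ascoli extracts a subsequence along which $Q(t_n+\cdot)\to\tilde Q(\cdot)$ and $y_{12}(t_n+\cdot)\to\tilde y(\cdot)$ uniformly on compact intervals; the limit satisfies the same dynamics together with the extra identity $\tilde Q(s)^2 \tilde y(s)=\tilde y(s)$ inherited from Barbalat. Differentiating this identity along the dynamics and using $\tilde y^T\tilde Q^2=\tilde y^T$ (which follows from $\tilde Q\in SO(n)$) cancels all cross terms and yields $\tfrac{d}{ds}\tilde Q^2=0$; hence $\tilde Q^2$ is constant, and the PE hypothesis inherited by $\tilde y$ then forces this constant to be $I_n$, making $\tilde Q(s)$ an involution for every $s$. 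Non-identity involutions in $SO(n)$ satisfy $\tr(Q)\le n-4$, hence $\hat f_s\ge 8>2$, so the invariant bound $\hat f_s<2$ excludes all of them and leaves $\tilde Q\equiv I_n$. As this holds for every subsequence, $Q(t)\to I_n$.
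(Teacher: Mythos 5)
Your proposal is correct. Part (a) follows essentially the paper's own route: gradient decrease of $\hat f_o$ rules out everything but $Z_y$ and $\hat C_o$, the flow restricted to $S_0$ reduces to $\dot\theta=-c\sin\theta$, and the factorization $Q_0=S R_0$ with $S\in\st(y_{12})$, $R_0\in S_0$, together with equivariance of the dynamics under left multiplication by $\st(y_{12})$, identifies the basin of $I_n$ as exactly $S_0$; your Morse--Bott Hessian computation merely replaces the paper's explicit inequality $\tfrac{d}{dt}\hat f_o\le-\hat f_o$ near $\st(y_{12})$. (Recheck your constants: with $\tfrac{d}{dt}Q=2Q\sk(Q^TM_{12})$ one gets $\dot v=y_{12}-(y_{12}^Tv)v$, not half of it; this is immaterial to the conclusion.) Part (b), however, is a genuinely different argument. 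The paper invokes a quantitative time-varying LaSalle theorem of Aeyels--Peuteman and shows by $\eps$-bookkeeping that $\hat f_s$ must drop by a definite amount over each window $[t,t+T]$ unless $\bar Q=I_n$, using the eigenvalue-phase bound implied by $\hat f_s<2$ to pass from $y^TQ^2y$ to $y^TQy$ and then the persistent-excitation condition. You instead combine Barbalat ($\|Q^2y-y\|\to0$), an Arzel\`a--Ascoli passage to limit trajectories, and the clean algebraic identity that $\tfrac{d}{ds}\tilde Q^2=0$ along any limit trajectory satisfying $\tilde Q^2\tilde y=\tilde y$ (your cancellation is correct, and it only uses the ODE for $\tilde Q$ plus the pointwise identity, so the nondifferentiability of the merely Lipschitz limit $\tilde y$ is not an issue); PE, which survives the limit by uniform convergence on compacts, forces $\tilde Q^2=I_n$, and the trace bound $\tr(Q)\le n-4$ for nontrivial involutions together with the forward-invariant bound $\hat f_s<2$ leaves $\tilde Q\equiv I_n$. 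This is more self-contained and the involution observation is a structural insight the paper does not exploit; the paper's quantitative route is closer to producing rates. One omission: you never address the claim that $\hat C_s$ is the \emph{only} asymptotically stable limit set; the paper disposes of it in one line --- $\hat f_s$ never increases along trajectories and $I_n$ is its unique local minimum on $SO(n)$, so no other set can attract a neighborhood --- and you should add the analogous sentence.
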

\begin{proof}
(a) Take $\hat{f}_o$ as Lyapunov function. Along solutions of (\ref{alg:2partial}), 
$\tfrac{d}{dt}\hat f_o = -\Vert \tfrac{d}{dt} Q \Vert^2_F = -(1+y_{12}^T Q y_{12}) \hat f_o \leq 0$. 
$SO(n)$ is compact and all functions are smooth, so by the LaSalle invariance principle (see e.g.~\cite{Khalil}) all solutions converge to the set where $\tfrac{d}{dt}\hat f_o=0$. Points with $1+y_{12}^T Q y_{12} = 0$ belong to $Z_y$; since they are maxima of $\hat f_o$, they attract no trajectories. So all trajectories starting outside $Z_y$ converge to $\hat f_o = 0$. In the neighborhood of $\st(y_{12})$, we have $y_{12}^T Q y_{12} > 0$ thus $\tfrac{d}{dt}\hat f_o < - \hat f_o$, which yields exponential convergence.

A trajectory $Q(t)$ of (\ref{alg:2partial}) starting at $Q(0) \in S_0$ stays in $S_0$ for all $t>0$; writing $Q$ as in the definition of $S_0$, dynamics reduce to
$\tfrac{d}{dt}z=0$, $\tfrac{d}{dt}\theta = -\tfrac{1}{2}\sin(\theta)$. Thus $\theta$ converges to $0$ for $\theta(0) \neq \pi$, so points in $S_0$ converge to $\hat{C}_s = \{ I_n \}$.
Denote $\bar S_0 = \{ I_n + (\cos(\theta)$-$1)\, (y_{12} y_{12}^T \! + \! z z^T) + \sin(\theta) \, (y_{12} z^T \! - \! z y_{12}^T) \, : \, z \in S^{n-1},\, z^T y_{12} = 0,\; \theta \in [-\pi,\pi] \}$. Note that $\bar{S}_0 \setminus S_0 \subset Z_y$. 
Any $Q \in SO(n)$ can be written $Q = R U$ with $R \in \st(y_{12})$ and $U \in \bar{S}_0$; this can be checked e.g.~by expressing $Q$ in an orthonormal basis with first element $y_{12}$ and showing that there is at least one solution $(R,U)$ to $Q=R U$ for any $Q\in SO(n)$.
Now if $U^*(t)$ is a solution of (\ref{alg:2partial}), 
then $R U^*(t)$ is also a solution of (\ref{alg:2partial}), for any $R \in \st(y_{12})$. 
Therefore solutions starting at $Q(0) = R U(0)$
with $U(0) \in S_0$ converge to $R$; solutions starting at $Q(0) = R U(0)$ with $U(0) \in \bar S_0\setminus S_0 \subset Z_y$ are constant.
Thus solutions starting outside $S_0$ do not converge to $\hat{C}_s = \{ I_n \}$.

(b) Take $\hat{f}_s$ as Lyapunov function: 
$\tfrac{d}{dt}\hat f_s = -2\tr(\tfrac{dQ}{dt}) = 2((Q y_{12})^T (Q^T y_{12}) -1) \leq 0$ along solutions of (\ref{alg:2partial}), with equality at time $t$ iff $Q^2(t) \in \st(y_{12}(t))$. Points outside $\{I_n\}$ cannot be stable since $I_n$ is the only local minimum of $\hat f_s$, which never increases along trajectories.

Since $y_{12}(t)$ is time-varying, a modified version of the LaSalle invariance principle must be used \cite{Aeyels1995}.
For a given set $S_1$, 
it ensures that $Q(0) \in SO(n)\setminus S_1$ asymptotically converge to $\{ I_n\}$ if, in addition to standard LaSalle conditions and technical requirements that automatically hold thanks to compactness of $SO(n)$ and smoothness of (\ref{alg:2partial}), the following hold: 
(i) solutions starting in $SO(n)\setminus S_1$ at $t=0$ remain in $SO(n)\setminus S_1$ for all $t>0$; 
(ii) denoting by $Q(t+\news; \bar{Q}, t)$ the solution at time $t+\news$ of (\ref{alg:2partial}) with initial condition $Q(t) = \bar{Q}$, there exists for any $\bar{Q} \in SO(n)\setminus (S_1 \cup \{I_n\})$ a finite time $\tau(\bar{Q})$ such that $\limsup_{t \rightarrow \infty} \hat f_s(Q(t+\tau(\bar{Q}); \bar{Q}, t),\, t+\tau(\bar{Q})) < \hat f_s(\bar{Q},\, t)$.

Taking $S_1= \{Q : \hat f_s(Q) \ge 2 \}$, (i) holds since $\tfrac{d}{dt}\hat f_s \leq 0$ along trajectories. We now show that (ii) holds for $\tau(\bar Q)= T$. 
Assume that this is not the case, i.e. there is a $\bar{Q}\in SO(n)\setminus S_1$ such that
\begin{equation}\label{eq:lims}
\limsup_{t \rightarrow \infty} \hat f_s(Q(t+\news; \bar{Q}, t)) = \hat f_s(\bar{Q}) \quad \forall \news \in [0,T] \; .
\end{equation}
Since $\tfrac{d}{dt}y_{12}$ is uniformly bounded, $y_{12}(t)$ is uniformly continuous.
Therefore $\tfrac{d}{d\news}\hat{f}_s(Q(t+\news; \bar{Q}, t))$ is uniformly continuous (in $\news$ and $t$) and it is necessary for satisfying (\ref{eq:lims}) that
\begin{eqnarray}
\nonumber 0 & = & \limsup_{t \rightarrow \infty} \, \min_{\news \in [0,T]} \, \tfrac{d}{d\news}\hat{f}_s(Q(t+\news; \bar{Q}, t)) \\
\label{eq:toref} & = & \limsup_{t \rightarrow \infty} \, \min_{\news \in [0,T]} \, 2\left( \, y_{12}(t+\news)^T \, (Q(t+\news; \bar{Q}, t))^2\, y_{12}(t+\news) \;-1 \, \right) \, .
\end{eqnarray}
This requires the existence, for any given $\eps > 0$, 
of a sequence $t_m \rightarrow \infty$ for which $1- y_{12}(t_m+\news)^T (Q(t_m+\news; \bar{Q}, t_m)^T)^2 y_{12}(t_m+\news) < \eps$ for all $\news\in[0,T]$.
Note that all $Q\in SO(n)$ with eigenvalues $e^{i \phi_j}$, $|\phi_j|<\pi/2$, satisfy $y^T Q^2 y \le y^T Q y$ for all $y\in S^{n-1}$; this is easily seen using the Jordan normal form of $Q$. Since $\bar{Q} \notin S_1$, there exists $\beta(\bar{Q}) > 0$ such that $\hat f_s(Q) = 2 (n - \tr(Q)) = 2 (n - \sum_{j=1}^n \text{real}(e^{i \phi_j})) \leq 2-2\beta$, equivalently $\sum_{j=1}^n \cos(\phi_j) \geq n-1+\beta$; this requires $|\phi_j| \leq \pi/2-\beta$ for all $j$. Therefore, 
$$
\eps > y_{12}(t_m+\news)^T (I_n - Q^2(t_m+\news; \bar{Q}, t_m)) y_{12}(t_m+\news) \ge 1-(y_{12}(t_m+\news)^T Q(t_m+\news; \bar{Q}, t_m) y_{12}(t_m+\news))
$$
for all $\news\in [0,T]$ and $m\in\N$. Bounding $\Vert Q(t+\news;\bar{Q}, t) - \bar{Q} \Vert_{F}$ 
by $\int_0^{\news} \Vert \tfrac{d}{dl} Q(l;\bar{Q},t)\vert_{l=t+\news} \Vert_{F} \, dl$ and using standard norm properties, we then have 
\begin{eqnarray*}
\lefteqn{\int_0^T \, 1-\tr(M_{12}(t_m+\news) \bar{Q}) d\news} && \\
	& \leq & \left\vert \int_0^T \, 1-\tr(M_{12}(t_m+\news) Q(t_m+\news;\bar{Q}, t_m) ) \, d\news \right\vert  \\
	&& \;\;\; + \left\vert \int_0^{T} y_{12}^T(t_m+\news) (Q(t_m+\news;\bar{Q}, t_m)-\bar{Q}) y_{12}(t_m+\news) d\news \right\vert \\
	& \leq & \int_0^T \, (\, 1 - \tr(M_{12}(t_m+\news) Q(t_m+\news;\bar{Q}, t_m) ) \,) \, d\news 
	+ \int_0^T \, \Vert Q(t_m+\news;\bar{Q}, t_m) - \bar{Q} \Vert_{F} \, d\news \\
	& \leq & T \, \max_{\news \in [0,T]} (\, 1 - \tr(M_{12}(t_m+\news) Q(t_m+\news;\bar{Q}, t_m) ) \,) 
	+ \tfrac{T^2}{2} \, \max_{\news \in [0,T]} \left( \Vert \tfrac{d}{dl} Q(l;\bar{Q},t_m)\vert_{l=t_m+\news} \Vert_{F} \right) \\
	& \leq & \max_{\news \in [0,T]} \, (T A + T^2 \sqrt{A}) \qquad \text{with }\;\; A=1-(y_{12}(t_m+\news)^T Q(t_m+\news;\bar{Q}, t_m) y_{12}(t_m+\news)) \\
	& \leq & T \eps + T^2 \sqrt{\eps} \, .
\end{eqnarray*}
Thus we need
$\; 0 = \liminf_{t\rightarrow\infty} \tfrac{1}{T} \int_0^T \, 1-\tr(M_{12}(t+\news) \bar{Q}) d\news = \liminf_{t\rightarrow\infty} \tr(B(t) (I_n- \bar{Q})) \;$ 
with $B(t) := \tfrac{1}{T} \int_0^T M_{12}(t+\news) \, d\news$ symmetric 
and $B(t)-\alpha I_n$ positive definite for all $t>0$. 
This is only possible if $\bar{Q}=I_n$. 
Therefore (ii) will be satisfied.
\end{proof}

\begin{remark}
(i) It is still unclear whether \emph{exponential} convergence to $\hat{C}_s$ holds in case (b). \newline
(ii) The decrease of $\hat{f}_s \cong f_s$ along solutions is a particularity of $k=2$.
\end{remark}

We briefly consider the $\R^n$ case. 
Defining $x = x_2 - x_1$ leads to $\tfrac{d}{dt}x = -2x$ (full-state coupling) or $\tfrac{d}{dt}x = - 2 M_{12} x$ (partial-state coupling). The first system obviously converges to $x=0$ $\Leftrightarrow$ $x_1 = x_2$. 
Convergence with partial-state coupling is established as follows.
\begin{itemize}
\item For $y_{12}$ fixed, $x$ globally converges to $(I_n - y_{12} y_{12}^T)\, x$, 
i.e.~the projection of $x_2-x_1$ onto $\operatorname{span}\{y_{12}\}$  is driven to $0$ and the projection of $x_1-x_2$
onto the orthogonal complement of $\operatorname{span}\{y_{12}\}$ remains constant.
\item For varying $y_{12}$ and under the same conditions as for $SO(n)$, $x$ globally converges to $0$, implying state synchronization. This standard result actually holds under weaker conditions, see \cite{ASP98}. It is currently not clear how to adapt \cite{ASP98} to $SO(n)$. This setting is equivalent to \cite{Emre08} in the special case where $\tfrac{d}{dt}y_{12} = \Omega \, y_{12}$ here, and zero-input individual system dynamics are $\tfrac{d}{dt}x_i = -\Omega \, x_i$ in \cite{Emre08}, with $\Omega \in \so(n)$.
\end{itemize}


\section{Synchronization conditions for $k>2$ agents}\label{sec:fixed}

We now turn to the general case of more than two agents.
We provide an extensive analysis for fixed $y_{ij}$ in Section~\ref{ssec:fix} and a
result for time-varying $y_{ij}$, similar to Proposition~\ref{prop:2agents}~(b), in Section~\ref{ssec:timvar}.

\subsection{Fixed \refy vectors}\label{ssec:fix}

We start with the case of fixed \refy vectors. An immediate result of the construction of our gradient law is the stability of the output synchronization set.

\begin{proposition}\label{prop:ocstable}
The output synchronization set is asymptotically stable under (\ref{alg:all}).
\end{proposition}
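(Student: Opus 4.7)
The dynamics (\ref{alg:all}) were constructed as negative gradient of $f_o$, so along any solution
$$\tfrac{d}{dt} f_o = -\Vert \grad f_o\Vert_F^2 \leq 0,$$
with equality iff $\grad f_o=0$. Moreover $f_o$ is a smooth non-negative function on the compact manifold $SO(n)^k$ and, directly from its definition in (\ref{PartialCost}), $f_o^{-1}(0)=C_o$. My plan is to use $f_o$ as a Lyapunov function, combine with LaSalle's invariance principle to get convergence to the critical set, and then argue that near $C_o$ the critical set coincides with $C_o$ itself.

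First, Lyapunov stability of $C_o$ is essentially for free. Since $C_o=f_o^{-1}(0)$ is compact and $f_o$ is continuous, the sublevel sets $\{f_o<c\}$ form a neighborhood basis of $C_o$: given any open $U\supset C_o$, compactness of $C_o$ and $SO(n)^k\setminus U$ together with $f_o>0$ outside $C_o$ yield some $c>0$ with $\{f_o\leq c\}\subset U$. Because $\tfrac{d}{dt}f_o\leq 0$, the set $\{f_o<c\}$ is positively invariant, which establishes the Lyapunov stability part of the claim. Second, because $SO(n)^k$ is compact all trajectories are precompact, and LaSalle's invariance principle gives that every $\omega$-limit set is contained in the set $E=\{(Q_1,\ldots,Q_k)\in SO(n)^k : \grad f_o=0\}$ of equilibria of (\ref{alg:all}), with $f_o$ constant on each such limit set.

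The only nontrivial step is to upgrade this to attractivity of $C_o$, i.e.\ to rule out that a trajectory starting arbitrarily close to $C_o$ converges to a spurious equilibrium with $f_o>0$. My approach is to show that there is a neighborhood $N$ of $C_o$ in which $C_o$ is the entire critical set of $f_o$. Once this is known, any trajectory starting inside a sublevel set $\{f_o<c\}\subset N$ stays in $N$ (by the Lyapunov step), and its $\omega$-limit set lies in $E\cap N=C_o$, so $\text{dist}(Q(t),C_o)\to 0$.

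To establish the claim about $N$, I would use the fact that $f_o$ is a polynomial in the entries of the $Q_i$'s, hence real analytic, and that $SO(n)^k$ is a real analytic compact manifold. The Łojasiewicz gradient inequality then furnishes a neighborhood $N$ of $C_o=f_o^{-1}(0)$ and constants $C,\alpha>0$ with $\Vert\grad f_o(x)\Vert_F^\alpha\geq C\,f_o(x)$ for all $x\in N$, which immediately forces $\grad f_o(x)\neq 0$ whenever $x\in N\setminus C_o$. I expect this is the main obstacle: all the Lyapunov and LaSalle machinery is routine, but excluding nearby critical points outside $C_o$ really requires either the analyticity/Łojasiewicz argument above, or a direct structural analysis of the equations $\sum_{j}\sk(Q_i^T M_{ij}Q_j)=0$ showing that solutions close to $C_o$ actually lie in $C_o$ (which would amount to a Morse–Bott type nondegeneracy statement for $f_o$ along $C_o$).
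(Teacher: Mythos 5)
Your proof is correct and follows essentially the same route as the paper: the paper's own proof is a one-line appeal to the fact that for a gradient descent system of an \emph{analytic} cost function the set of global minima is asymptotically stable (citing Absil--Kurdyka), and your Lyapunov/LaSalle argument combined with the {\L}ojasiewicz gradient inequality near the compact zero set $C_o=f_o^{-1}(0)$ is precisely a self-contained reconstruction of that cited result. The only difference is that you unpack the citation rather than invoke it.
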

\begin{proof}
Since (\ref{alg:all}) is a gradient descent system for $f_o$ and $f_o$ is an analytic function, the set
$C_o$ of global minima of $f_o$ is asymptotically stable~\cite{ak:06}. 
\end{proof}

The same clearly holds for  (\ref{eq:RnCons}), the corresponding system in $\R^n$.
However on $SO(n)$, as will be illustrated in Section~\ref{sec:SO3},
\emph{convergence is not necessarily exponential} for $k>2$. 

\subsubsection{Necessary condition for state synchronization}

The following theorem formalizes the natural condition: to reach state synchronization, each agent's state must be observable by combining information on all edges incident to the agent.
Later we show that this is \emph{not} sufficient (at all) for $C_s = C_o$.
The necessary condition for $C_s=C_o$ in Theorem~\ref{thm:inj} can be extended in straightforward way to the following necessary condition: 
considering any partition of $\V$ into $\V_1$ and $\V_2$, the information on edges connecting $\V_1$ to $\V_2$ must make a full state $\in SO(n)$ observable between the two subsets, i.e. there must be at least $n-1$ links on the cut with linear independent \refy vectors.

\begin{theorem} \label{thm:inj}
The \emph{output map} from state space $(SO(n))^k$ (resp.~$(\R^n)^k$) to output space $(S^{n-1})^{2\#\E}$ (resp.~$\R^{2\#\E}$) is injective if and only if each $i\in \V$ has at least $n-1$ (resp.~$n$) linearly independent $y_{ij}$ with $a_{ij}=1$.
If injectivity is not satisfied, then for any $(Q_1,\ldots,Q_k)$ in $C_o$ there is a continuum of states in $C_o\setminus C_s$ giving the same output as $(Q_1,\ldots,Q_k)$.
\end{theorem}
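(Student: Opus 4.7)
The argument splits naturally into necessity and sufficiency, and by the local character of the output map (each output $Q_i^T y_{ij}$ depends only on $Q_i$) it is enough to analyze, for each vertex $i$ separately, when the family of maps $Q_i \mapsto Q_i^T y_{ij}$, $j \in \ed(i)$, is injective as a function of $Q_i$ alone. So I would first reduce the global injectivity statement to a per-vertex statement, and then prove the per-vertex statement on $SO(n)$ and on $\R^n$.

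\textbf{Sufficiency.} Fix vertex $i$ and assume the vectors $\{y_{ij} : j \in \ed(i)\}$ contain $n-1$ linearly independent ones (resp.~$n$ on $\R^n$). On $\R^n$ the map $x_i \mapsto (x_i^T y_{ij})_{j}$ is linear and its kernel is the orthogonal complement of $\mathrm{span}\{y_{ij}\}$; requiring this to be $\{0\}$ is exactly the rank-$n$ condition. On $SO(n)$, write $y_{i1},\ldots,y_{i,n-1}$ for an independent subfamily and extend by a unit vector $y_\perp \in \mathrm{span}\{y_{ij}\}^\perp$ to an ordered basis of $\R^n$. The knowledge of $Q_i^T y_{ij}$ for $j=1,\ldots,n-1$ determines $Q_i^T$ on $\mathrm{span}\{y_{i1},\ldots,y_{i,n-1}\}$, hence determines $Q_i^T y_\perp$ up to sign since $Q_i^T \in O(n)$ must send an orthogonal basis to an orthogonal basis of prescribed norms; the remaining sign is uniquely fixed by $\det(Q_i)=+1$. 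So $Q_i$ is uniquely determined, which proves injectivity.

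\textbf{Necessity and the continuum.} Now assume vertex $i$ has only $d$ linearly independent $y_{ij}$'s, with $d < n-1$ on $SO(n)$ (resp.~$d<n$ on $\R^n$); let $V_i = \mathrm{span}\{y_{ij} : j \in \ed(i)\}$. On $\R^n$, $V_i^\perp$ has dimension $\geq 1$ and any $v \in V_i^\perp$ satisfies $(x_i+v)^T y_{ij} = x_i^T y_{ij}$, giving a non-trivial continuum of pre-images. On $SO(n)$, writing $\hat Q_i = R Q_i$ with $R \in SO(n)$, the outputs at $i$ are preserved iff $R^T y_{ij} = y_{ij}$ for all $j\in\ed(i)$, iff $R \in \bigcap_{j\in\ed(i)}\st(y_{ij})$. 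This intersection is the stabilizer of $V_i$ in $SO(n)$, which is isomorphic to $SO(n-d)$; under our assumption $n-d\geq 2$, so the stabilizer is a continuous group of positive dimension. For every such $R \neq I_n$ the configuration obtained from $(Q_1,\ldots,Q_k) \in C_o$ by replacing $Q_i$ with $R Q_i$ still lies in $C_o$ (edges not incident to $i$ are unaffected; those incident to $i$ have unchanged output at $i$ and are unchanged at the other endpoint), but it differs from the original on coordinate $i$ only. In particular, if we start from a point of $C_s$, i.e.~$Q_1=\ldots=Q_k=Q$, the modified configuration satisfies $\hat Q_i = RQ \neq Q = Q_j$ for $j\neq i$, so it lies in $C_o\setminus C_s$, and letting $R$ vary over the stabilizer yields the claimed continuum.

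\textbf{Expected difficulty.} The only delicate point is the determinant argument on $SO(n)$: making precise that $n-1$ independent outputs determine $Q_i$ \emph{exactly}, not merely up to $O(n)$-ambiguity. This hinges on the observation that the remaining direction $y_\perp$ has a well-defined signed image once an orientation is fixed, because otherwise one would obtain an element of $O(n)\setminus SO(n)$. Everything else is essentially a dimension count on the stabilizers $\st(y_{ij})$ and their intersections, for which the decomposition $Q = R U$ with $R$ in the stabilizer of $V_i$ (already used in the proof of Proposition~\ref{prop:2agents}) provides a convenient template.
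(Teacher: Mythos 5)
Your proof is correct and follows essentially the same route as the paper: reduce to per-vertex injectivity, identify the ambiguity of agent $i$'s output map with the intersection of stabilizers $\bigcap_{j}\st(y_{ij})$, and use $\det Q_i = 1$ to pin down the last degree of freedom when the $y_{ij}$ span a hyperplane. The paper phrases sufficiency via eigenvalues of elements of that stabilizer rather than your basis-image argument, and it notes explicitly that for a general starting point in $C_o$ at most one element of the stabilizer can land in $C_s$ (a remark your "in particular" step leaves implicit), but these are cosmetic differences.
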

\begin{proof}
We first discuss the $SO(n)$ case. Consider an agent $i$ and number the agents such that $a_{ij}=1$ if and only if $j \le m$.
Denote by $h_i\colon SO(n)\times (S^{n-1})^m \rightarrow (S^{n-1})^m$
the action of $Q_i$ on $\hat{y}_i := (y_{i 1},\ldots, y_{i m})$, 
i.e. $h_i(Q_i,\hat{y}_i)=(Q_i^T y_{i 1},\ldots, Q_i^T y_{i m})$. 
The map $Q_i\mapsto h_i(Q_i,\hat y_{i})$ is the output map of agent $i$. Define
$\st_h(\hat{y}_i):=\{ Q\in SO(n): h_i(Q,\hat{y}_i)=\hat{y}_i \} = \bigcap_{j=1}^m \st(y_{ij})$.
Thus by definition $y_{i1},\ldots,y_{im}$ are eigenvectors with eigenvalue $1$
of all $Q\in \st_h(\hat{y}_i) \subseteq SO(n)$.
Then $\st_h(\hat{y}_i)=\{I_n\}$ if and only if $\{y_{i1},\ldots,y_{im}\}$ contains $(n-1)$ linearly independent vectors. 
Indeed, the condition implies that all $Q \in \st_h(\hat{y}_i)$ have $(n-1)$ eigenvalues $1$, and $Q \in SO(n)$ implies that the remaining eigenvalue also equals $1$; conversely, if there is a $\geq2$-dimensional subspace $S_0$ orthogonal to $\{y_{i1},\ldots,y_{im}\}$, then all $Q$ representing rotations in $S_0$ belong to $\st_h(\hat{y}_i)$.
By invariance of linear independence under a common rotation, the same holds if $\hat{y}_i$ is replaced by $h_i(Q^*,\hat{y}_i)$ for any $Q^* \in SO(n)$.
Therefore a particular output $h_i(Q^*_i,\hat{y}_i)$ of agent $i$ corresponds to a unique state $Q^*_i$ of agent $i$ if and only
if $\{y_{i1},\ldots,y_{im}\}$ contains $(n\!-\!1)$ linearly independent vectors.
As the whole output map is just the collection of $h_i$ for all $i \in \V$, with $h_i$ depending on the state of agent $i$ only, 
this proves the first claim.
As said above, if the condition does not hold for agent $i$ then $\st_h(\hat{y}_i)$ contains rotations in a subspace $S_0$, that form a Lie subgroup of $SO(n)$ of dimension $\ge 1$. If $(Q_1,\ldots,Q_k) \in C_o$, then $(Q_1,\ldots,\hat{Q}Q_i,\ldots,Q_k) \in C_o$ yields the same output for all $\hat{Q}\in \st(\hat{y}_i)$, but at most one $\hat{Q}$ corresponds to state synchronization.

The analog property for $\R^n$ is straightforward: projections on $n$ linearly independent \refy vectors specify the state completely, while less than $n$ projections leave the possibility to move the state in one direction without affecting the output.
\end{proof}\vspace{4mm}

The following Examples~\ref{ex:nocon} and~\ref{ex:Rn} illustrate that injectivity of the output map \emph{does not guarantee $C_s=C_o$}. This happens because there exist output values $Q^T_i y_{ij}$ for states in $C_o$, that cannot be obtained with states in $C_s$. Therefore Theorem~\ref{thm:inj} only gives a necessary condition: unlike what first intuition might suggest, \emph{each agent injecting full state information into the network does not imply state synchronization.}
This suggests that a theory of observability in networks is non-trivial.

\begin{example} \label{ex:nocon}
Consider 3 agents on $SO(3)$ with a fully connected interaction graph and $y_{12} = (1,0,0)^T$, $y_{23} = (0,1,0)^T$, $y_{13}=(0,0,1)^T$. Assumptions of Theorem~\ref{thm:inj} hold, so each output synchronization value corresponds to a unique point in state space. Let output synchronization be reached at $Q_1^T y_{12} = Q_2^T y_{12} = (1,0,0)^T$, $Q_1^T y_{13} = Q_3^T y_{13} = (0,0,1)^T$ and $Q_2^T y_{23} = Q_3^T y_{23} = (0,-1,0)^T$. The corresponding unique state is
$Q_1=\diag(1,1,1) \, , \; Q_2=\diag(1,-1,-1) \, , \; Q_3=\diag(-1,-1,1)$
which is not state synchronization.
\end{example}

\begin{example} \label{ex:Rn}
Consider 4 agents with states $x_i \in \R^2$ where $\E = \{\,(1,2),\,(1,4),\,(2,3),\,(3,4)\,\}$ and with $y_{12}= (1, 0)^T$, $y_{14}=(0, 1)^T$, $y_{23}=(\cos\theta,\sin\theta)^T$, $y_{34}=(\cos\phi,\sin\phi)^T$, assuming $\theta,\phi \not\in\{k \pi/2 : k \in\mathbb{Z} \}$ and $y_{23} \neq \pm y_{34}$. The setting is illustrated in Fig.~\ref{fig:RnEx}. Assumptions of Theorem~\ref{thm:inj} hold, so the output map is injective. Note that the generalized necessary condition for $C_s=C_o$ discussed before Theorem~\ref{thm:inj} is also satisfied. Nevertheless, states of the form $x_1=(a,b)^T$, $x_2=(a,\,d+\tfrac{c-a}{\tan\theta})^T$, $x_3=(c,d)^T$, $x_4=(c+(d-b)\tan\phi,\,b)^T$ correspond to output synchronization for any $(a,b,c,d) \in \R^4$, while state synchronization only holds if $c=a$ and $d=b$.
\end{example}

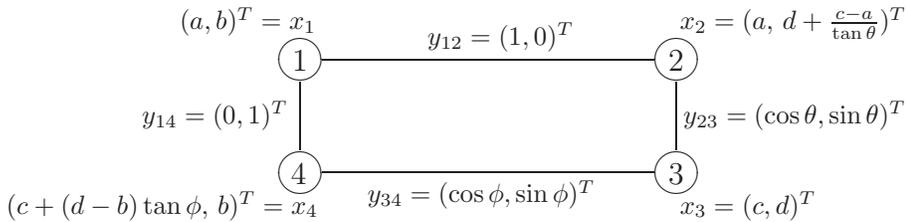
\begin{figure}[hbt]
	\begin{center}
		\setlength{\unitlength}{1mm}
		\begin{picture}(60,23)(0,2)
			\multiput(3,5.5)(0,15){2}{\circle{6}}
			\put(1.8,4){$4$} \put(2,19){$1$}
			\multiput(53,5.5)(0,15){2}{\circle{6}}
			\put(52,4){$3$} \put(52,19){$2$}
			\multiput(6,5.5)(0,15){2}{\line(1,0){44}}
			\multiput(3,8.5)(50,0){2}{\line(0,1){9}}
			\put(20,22.5){\footnotesize $y_{12}=(1,0)^T$}
			\put(12,1.5){\footnotesize $y_{34}=(\cos\phi,\sin\phi)^T$}
			\put(-18,12){\footnotesize $y_{14}=(0,1)^T$}
			\put(54,12){\footnotesize $y_{23}=(\cos\theta,\sin\theta)^T$}
			\put(-13,24.5){\footnotesize $(a,b)^T=x_1$}
			\put(53.5,24.5){\footnotesize $x_2=(a,\,d+\tfrac{c-a}{\tan\theta})^T$}
			\put(53.5,0){\footnotesize $x_3=(c,d)^T$}
			\put(-36,0){\footnotesize $(c+(d-b)\tan\phi,\,b)^T=x_4$}
		\end{picture}
	\end{center}
	\caption{
	Example~\ref{ex:Rn} --	each agent sharing its whole state with the network is not sufficient to ensure that output synchronization implies state synchronization.}\label{fig:RnEx}
\end{figure}


\subsubsection{Sufficient conditions for state synchronization}

The output cost functions can be written as a generalization of the Laplacian-based quadratic form often used in the context of synchronization algorithms: 
\begin{eqnarray}\label{rew:SOn}
f_o(Q_1,\ldots,Q_k) & = &
\frac{1}{2} \; \left( \vect \begin{pmatrix}
Q_1 \\
\vdots \\
Q_k
\end{pmatrix} \right)^T \; (I_n \otimes L^g) \; \;
\left(\vect \begin{pmatrix}
Q_1 \\
\vdots \\
Q_k
\end{pmatrix} \right) \\[2mm]
\label{rew:Rn}
g_o(x_1,\ldots,x_k) & = & \frac{1}{2} \; \begin{pmatrix}
x_1 \\
\vdots \\
x_k
\end{pmatrix}^T \, L^g \, \begin{pmatrix}
x_1 \\
\vdots \\
x_k
\end{pmatrix}
\end{eqnarray}
where $L^g = ( L^g_{ij} ) \in \R^{kn\times kn}$ is an extended Laplacian matrix with $L^g_{ij}\in \R^{n\times n}$ defined by \newline $
L^g_{ij} = -a_{ij} M_{ij} \quad \text{  for } i\not=j \, ,\qquad
L^g_{ii} = \sum_{j=1}^k a_{ij} M_{ij}\; .$
The standard Laplacian matrix $L$ has the same definition but with $M_{ij}$ replaced by $1$, i.e.~$L_{ij} \in \R$. For undirected graphs, $L= B B^T$ where $B \in \R^{k \times \#\E}$ is the \emph{incidence matrix} of $\G$: each column corresponds to an edge $e$ and contains all zeros except $1$ on row $\vr(e)$ and $-1$ on row $\vl(e)$. It is straightforward to check that $L^g = (B \otimes I_n) \, W \, (B \otimes I_n)^T$ where $W \in \R^{n(\#\E) \times n(\#\E)}$ is block diagonal with $M_{ij}$ in the block corresponding to edge $(i,j)$. Standard rank properties yield
\begin{equation}\label{eq:rankprop}
	\rk(L^g) \leq \min(\rk W ,\; n\,\rk B ) =  \min(\# \E,\; n\,\rk L ) \, .
\end{equation}
The rank of $L^g$ is thus limited by number of edges and graph connectivity:
for a graph on $k$ vertices with $m$ connected components, $\rk L  = k-m$. However (\ref{eq:rankprop}) is far from being tight, as it contains no information about the output map while $L^g$ does: if e.g.~all $y_{ij}$ are equal, then $\rk L^g = \rk L \le (k-1)$.
For agents in $\R^n$, $L^g$ taking its maximal possible rank is necessary and sufficient for global, exponential state synchronization.

\begin{proposition} \label{thm:LgRn}
$C_o = C_s$ if and only if $\rk L^g = n(k-1)$ for partial state coupling on $\R^n$.
Moreover, if $\rk L^g =n(k-1)$ the set $C_o=C_s$ is globally exponentially stable under (\ref{eq:RnCons}).
\end{proposition}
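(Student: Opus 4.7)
The plan is to reduce everything to linear algebra by stacking the agent states. Writing $x = \vect(x_1^T,\ldots,x_k^T)^T \in \R^{kn}$, the coupled dynamics (\ref{eq:RnCons}) becomes simply $\tfrac{d}{dt}x = -L^g x$, and by (\ref{rew:Rn}), $g_o(x_1,\ldots,x_k) = \tfrac{1}{2} x^T L^g x$. Thus $C_o = \{x : g_o = 0\}$ coincides with $\ker L^g$, since $L^g = (B\otimes I_n) W (B\otimes I_n)^T$ is symmetric positive semidefinite ($W$ is block-diagonal with PSD blocks $M_{ij}$).

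For the equivalence $C_o = C_s \Leftrightarrow \rk L^g = n(k-1)$, I would first observe that $C_s$ is always contained in $C_o$: if $x_1=\cdots=x_k=v$, then a block row of $L^g x$ equals $\sum_j L^g_{ij} v = 0$ because the block row sums vanish ($L^g_{ii} = \sum_{j\neq i}a_{ij}M_{ij} = -\sum_{j\neq i} L^g_{ij}$). Since $C_s$ is the diagonal subspace of $(\R^n)^k$, it has dimension $n$. From $\dim \ker L^g = kn - \rk L^g$, the inclusion $C_s \subseteq C_o$ becomes an equality precisely when $\dim \ker L^g = n$, i.e.\ when $\rk L^g = n(k-1)$.

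For global exponential stability under the rank condition, I would exploit that $L^g$ is symmetric PSD. Decompose $\R^{kn} = \ker L^g \oplus (\ker L^g)^\perp$; both subspaces are $L^g$-invariant. Write $x(t) = x_\parallel(t) + x_\perp(t)$ according to this decomposition. Since $\dot x = -L^g x$ and $L^g x_\parallel = 0$, we have $\dot x_\parallel = 0$ (so the projection onto $C_s$ is preserved, reflecting conservation of the average state) and $\dot x_\perp = -L^g x_\perp$. On $(\ker L^g)^\perp$, the restriction of $L^g$ is symmetric and strictly positive definite, with smallest eigenvalue $\lambda_\ast > 0$. Hence $\Vert x_\perp(t) \Vert_2 \le e^{-\lambda_\ast t}\Vert x_\perp(0)\Vert_2$, which gives global exponential convergence to $C_s = C_o$ at rate $\lambda_\ast$.

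There is no real obstacle here; the only thing to be pedantic about is the symmetric PSD structure of $L^g$ that makes the eigen-decomposition clean and justifies the exponential rate. All the nontrivial geometric work has been absorbed into the reformulation (\ref{rew:Rn}) and the factorization $L^g = (B\otimes I_n) W (B\otimes I_n)^T$, which is why the $\R^n$ case is the easy warm-up for the genuinely harder $SO(n)$ analysis that follows.
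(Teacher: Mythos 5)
Your proposal is correct and follows essentially the same route as the paper: stack the states to get $\tfrac{d}{dt}x=-L^g x$ with $g_o=\tfrac12 x^TL^gx$, identify $C_o=\ker L^g$ via positive semidefiniteness, note $C_s\subseteq\ker L^g$ with $\dim C_s=n$ so that equality holds iff $\rk L^g=n(k-1)$, and conclude exponential convergence from linearity. The paper compresses the last step into one sentence ("linearity implies convergence is exponential and global"), whereas you spell out the spectral decomposition on $\ker L^g\oplus(\ker L^g)^\perp$; this is just added detail, not a different argument.
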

\begin{proof}
Rewriting (\ref{eq:RnCons}) using (\ref{rew:Rn}) we get $\tfrac{d}{dt}x = -L^g \, x$ with $x = (x_1^T, \ldots, x_k^T)^T \in \R^{nk}$. As $L^g$ is positive semidefinite, $x$ converges to $\ker(L^g) = C_o$; the latter is stable by Proposition \ref{prop:ocstable}. 
Since $C_s \subseteq C_o = \ker(L^g)$ and $C_s$ has dimension $n$, we have $C_o = C_s$ if and only if $\rk L^g = n(k-1)$. Linearity implies that convergence is exponential and global.
\end{proof}

The situation on $SO(n)$ is quite different: (i) the rank condition is only sufficient; (ii) state synchronization is usually not global, even for a complete graph satisfying the rank condition; and (iii) convergence is not necessarily exponential.

\begin{theorem} \label{thm:LgSOn}
Consider dynamics (\ref{alg:all}) for partial state coupling on $SO(n)$.\newline
(a) If $\rk L^g = n(k-1)$ then $C_o = C_s$ and the set $C_o=C_s$ is locally exponentially stable.\newline
(b) Denote $V \in \R^{n^2 k \, \times \, kn(n-1)/2}$ a matrix whose column vectors are an orthonormal basis for $\vect(\so (n)^k) \subseteq \mathbb{R}^{n^2k}$. $C_s$ is locally exponentially stable if and only  if $\; \rk (V^T (I_n \otimes L^{g}) V) = (k-1) \tfrac{n(n-1)}{2}$.
\newline NB1: Non-exponential asymptotic stability of $C_s$ is still possible if the rank condition fails.
\newline NB2: We refer to Section~\ref{sec:notation} for the definition of $\vect(\so (n)^k)$.
\end{theorem}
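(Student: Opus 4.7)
The plan is to derive both parts from a second-order expansion of $f_o$ around a representative point of $C_s$, obtaining (a) as a consequence of (b) together with a characterization of $\ker L^g$.

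For the $C_o=C_s$ claim in (a), I use the rewriting (\ref{rew:SOn}). Setting $Y=[Q_1;\ldots;Q_k]\in\R^{kn\times n}$, the identity $(I_n\otimes L^g)\vect(Y)=\vect(L^g Y)$ gives $f_o=0$ iff every column of $Y$ lies in $\ker L^g$. A direct block computation verifies $\{\mathbf{1}_k\otimes q : q\in\R^n\}\subseteq\ker L^g$, and the hypothesis $\rk L^g=n(k-1)$ forces equality by dimension counting. Therefore each column of $Y$ has the form $\mathbf{1}_k\otimes q$, which is exactly $Q_1=\cdots=Q_k$.

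For (b), the right-invariance of (\ref{alg:all}) and $f_o$ under simultaneous $Q_i\mapsto Q_i R$ conjugates the linearization at any $(Q^*,\ldots,Q^*)\in C_s$ to the one at $(I_n,\ldots,I_n)$, so I restrict attention to that representative. Parameterize $Q_i=\exp(\Omega_i)$ with $\Omega_i\in\so(n)$ and expand $\tr(M_{ij}\exp(\Omega_j)\exp(-\Omega_i))$ to second order: the constant term is $\tr(M_{ij})=1$ and cancels the $1$ in (\ref{PartialCost}); the linear term vanishes because $M_{ij}$ is symmetric while $\Omega_j-\Omega_i$ is skew; and using cyclicity together with $\Omega^T=-\Omega$ (which makes $\tr(M_{ij}\Omega_i\Omega_j)=\tr(M_{ij}\Omega_j\Omega_i)$), the quadratic part collects to
\[
f_o\;=\;\tfrac12\sum_{(i,j)\in\E}\tr\bigl((\Omega_i-\Omega_j)^T M_{ij}(\Omega_i-\Omega_j)\bigr)\;+\;O(\|\Omega\|^3)\,.
\]
Applying the column-wise identity $\tr(A^TBA)=\sum_l(\text{col}_l A)^T B(\text{col}_l A)$ reproduces exactly the quadratic form (\ref{rew:SOn}) with $Q_i$ replaced by $\Omega_i$. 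Hence, writing $\vect(\Omega_1,\ldots,\Omega_k)=Vw$ with $w\in\R^{kn(n-1)/2}$, the Hessian $H$ of $f_o$ at $(I_n,\ldots,I_n)$ in the $w$ coordinates is precisely $V^T(I_n\otimes L^g)V$.

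I then invoke standard gradient-flow theory on Riemannian manifolds: the descent $\tfrac{d}{dt}Q=-\grad f_o$ is locally exponentially stable along a critical submanifold iff the Hessian is positive definite on its normal bundle. The tangent of $C_s$ at $(I_n,\ldots,I_n)$ is the diagonal $\{(\Omega,\ldots,\Omega):\Omega\in\so(n)\}$, of dimension $n(n-1)/2$, and is automatically contained in $\ker H$ (it corresponds via $V$ to $\vect(\mathbf{1}_k\otimes\Omega)\in\ker(I_n\otimes L^g)\cap\vect(\so(n)^k)$). Local exponential stability of $C_s$ is therefore equivalent to $H$ having no further kernel directions, i.e.~$\rk H=kn(n-1)/2-n(n-1)/2=(k-1)n(n-1)/2$, which is the condition in (b). Part~(a) then follows: when $\rk L^g=n(k-1)$, the intersection $\ker(I_n\otimes L^g)\cap\vect(\so(n)^k)$ reduces exactly to $\{\vect(\mathbf{1}_k\otimes\Omega):\Omega\in\so(n)\}$ of dimension $n(n-1)/2$, so the rank condition of (b) holds and exponential stability is obtained. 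The main delicate step is the second-order bookkeeping — exploiting symmetric $M_{ij}$ against skew $\Omega_i$ to kill the linear term and collapse the cross terms into the clean factor $(\Omega_i-\Omega_j)^T M_{ij}(\Omega_i-\Omega_j)$ — together with the clean identification of $\ker H$ with the tangent to $C_s$, which is what converts the dynamical exponential-stability question into the algebraic rank condition on $V^T(I_n\otimes L^g)V$.
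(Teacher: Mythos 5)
Your proposal is correct and follows essentially the same route as the paper: both arguments identify the Hessian of $f_o$ at a consensus point with the restriction of $I_n\otimes L^g$ to $\vect(\so(n)^k)$, use the dimension count on $\ker L^g$ (equivalently, on the minimizing subspace of the extended quadratic form) to get $C_o=C_s$ in part (a), and reduce exponential stability to positive definiteness of that restriction transverse to the diagonal $\{(\Omega,\ldots,\Omega)\}$. The only substantive difference is that where you invoke ``standard gradient-flow theory'' for the equivalence between exponential stability of the critical submanifold and positive definiteness of the Hessian on its normal bundle, the paper proves this step explicitly in Lemma~\ref{lem:BasicConv} by quotienting by the right-action of $C_s$, so that $C_s$ collapses to an isolated equilibrium of a gradient flow on the reductive homogeneous space $SO(n)^k/C_s$.
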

\begin{proof}
Extend $f_o(X_1,\ldots,X_k)$ to $X_i \in \R^{n\times n}$. 
Denote $S$ the subspace of global minima of $f_o$ on $(\R^{n \times n})^k$. 
Obviously $C_o \subset S$. 
In addition, $S$ contains the subspace
$C_s^r := \{(X, \ldots, X) :\, X \in \R^{n \times n} \}$. If $\rk(L^g) = n (k-1)$ then $\rk(I_n\otimes L^g) = n^2k - n^2$ and from (\ref{rew:SOn}), $S$ is $n^2$-dimensional, like $C_s^r$, thus $S = C_s^r$. Then $C_o \subset S = C_s^r$ and the intersection with $(SO(n))^k$ yields $C_o = C_s$.

We apply Lemma~\ref{lem:BasicConv} from the appendix. 
For $(\hat{Q},\ldots,\hat{Q}) \in C_s$, 
we have $L^{gg} = (I_{k}\otimes \hat{Q}^T) L^g (I_{k}\otimes \hat{Q})$, with $L^{gg}$ as defined in Lemma~\ref{lem:BasicConv}. Thanks to right-invariance w.r.t.~absolute orientation, 
we can reduce the investigation to $\hat{Q} = I_n$ without loss of generality, so $L^{gg} = L^g$.
From the previous paragraph, if $\rk L^g = n(k-1)$ then $I_n \otimes L^g$ is positive definite on the subspace of $\R^{kn^2}$ orthogonal to $\vect(C_s^r)$, 
thus in particular
on the subspace of $\vect(\so (n)^k)$ orthogonal to $\vect(C_s^r)$.
So the condition of Lemma~\ref{lem:BasicConv} is satisfied and we get exponential stability of $C_o$, proving (a). By definition, $L_V := V^T (I_n \otimes L^{g}) V$ is the restriction of $(I_n \otimes L^{g})$ to $\vect(\so (n)^k)$. Obviously, $\vect(C_s^r) \cap \vect(\so(n)^k) = \{\vect((\Omega^T,\ldots, \Omega^T)^T) : \Omega \in \so(n) \} \subseteq \ker(L_V)$. Since the dimension of $\vect(C_s^r) \cap \vect(\so (n)^k)$ is $\tfrac{n (n-1)}{2}$, it follows that $L_V$ is full rank in the subspace of $\vect(\so (n)^k)$ orthogonal to $\vect(C_s^r)$ if and only if $L_V$ has rank $\tfrac{kn(n-1)}{2} - \tfrac{n (n-1)}{2} = (k-1) \tfrac{n(n-1)}{2}$, proving (b).
\end{proof}\vspace{3mm}

Conditions (a) and (b) of Theorem~\ref{thm:LgSOn} are complementary. 
In condition (a), roles of graph connectivity and \refy vectors can be visualized to some extent. 
In particular, from~(\ref{eq:rankprop}), $\rk L^g = n(k-1)$ requires at least $n(k-1)$ edges and thus $k \ge 2n$ agents. 
Moreover, checking (a) involves $L^g \in \R^{nk \times nk}$ and is less costly than (b) which involves $n^2 k \times n^2 k$ matrices.
On the other hand, (b) examines the rank of $(I_n \otimes L^g)$ on the tangent space of $(SO(n))^k$ and therefore is tighter than (a) which basically checks it in $\R^{n^2 k \times n^2 k}$. Condition (b) can even prove local exponential stability of $C_s$ in situations where $C_o \neq C_s$. If condition (b) fails, $C_s$ can still be asymptotically stable, although convergence cannot be exponential. Section~\ref{sec:SO3} contains examples illustrating these properties for $SO(3)$.\vspace{2mm}

\begin{example}\label{Ex:SO2}
On $SO(2) \equiv S^1$, an output $Q_i^T y_{ij}$ is equivalent to full state knowledge: explicitly writing (\ref{alg:all}) and (\ref{FullDyns}) in terms of rotation angles yields the same result up to gain factor $2$. It is in fact equivalent to Kuramoto-like dynamics $\tfrac{d}{dt}\theta_i = \sum a_{ij} \; \sin(\theta_j - \theta_i)$ for which local (sometimes almost-global) state synchronization holds as soon as $\G$ is connected, see e.g.~\cite{ss:08}. There are many synchronizing situations where, for Theorem~\ref{thm:LgSOn}(a), $\rk(L^g) < 2(k-1)$, e.g.~when $\G$ has less than $2(k-1)$ edges. In contrast, Theorem~\ref{thm:LgSOn}(b) perfectly captures the situation on $SO(2)$: 
calculations with $V$ constructed e.g.~from basis elements $(\ldots,\left(\begin{smallmatrix} 0 & 0 \\ 0 & 0 \end{smallmatrix}\right) ,\left( \begin{smallmatrix} 0 & 1 \\ -1 & 0 \end{smallmatrix}\right),\left(\begin{smallmatrix} 0 & 0 \\ 0 & 0 \end{smallmatrix}\right),\ldots)$ of $\so(2)^k$ show that actually $V^T (I_n \otimes L^{g}) V = L$.
\end{example}


\subsubsection{About almost-global convergence}

On $\R^n$, stability of state synchronization is always global, see Proposition~\ref{thm:LgRn}. 
On $SO(n)$ with full state exchange, see~\cite{ss:08}, $C_s$ is not always (almost-)globally asymptotically stable, but (\ref{FullDyns}) has no other stable equilibria than $C_s$ when $\G$ is e.g.~all-to-all or a tree. 
For partial state coupling on $SO(n)$, $n > 2$, (\ref{alg:all}) does not retain these global properties\footnote{Contrary to an erroneous statement in \cite{myCDC}, Theorem 2(b).} for trees and complete graphs. For a tree, the number of links is insufficient to share all state information: the necessary condition from Theorem~\ref{thm:inj} is not satisfied. 
The following example illustrates that even with all-to-all interaction and conditions of Theorem~\ref{thm:LgSOn} holding, stable equilibria
besides the state consensus set can exist.

\begin{example}\label{Ex:NotGlobal}
Consider $k=7$ agents on $SO(3)$, interconnected with: $y_{ij} = (0,\, 1,\, 0)^T$ if $(i-j) \, \mathrm{mod}\, 7 = \pm 1$, $y_{ij} = (0,\, 0,\, 1)^T$ if $(i-j) \, \mathrm{mod}\, 7 = \pm 3$ and $y_{ij} = (\sin(\alpha) ,\, 0,\, \cos(\alpha))^T$ if $(i-j) \, \mathrm{mod}\, 7 = \pm 2$, with $\alpha = 0.04$.
This implies $\rk(L^g) = 18 = n(k-1)$, so Theorem~\ref{thm:LgSOn} applies and $C_s=C_o$ is locally stable.
Nevertheless, (\ref{alg:all}) admits an equilibrium $\notin C_s$ of the form $Q_i = Q_y \, Q_z(i \tfrac{2 \pi}{7})$, $i = 1,\ldots,7$, with
$$
Q_z(\theta) = \left( \begin{array}{ccc}
\cos(\theta) & \sin(\theta) & 0 \\ -\sin(\theta) & \cos(\theta) & 0 \\ 0 & 0 & 1
\end{array}\right)\qquad \text{,} \qquad Q_y = \left( \begin{array}{ccc}
\cos(\beta) & 0 & \sin(\beta) \\ 0 & 1 & 0 \\ -\sin(\beta) & 0 & \cos(\beta)
\end{array}\right)
$$
and $\beta = -0.0558$.
Checking the Hessian at that point shows that it is stable. The small but nonzero values of $\alpha$ and consequently $\beta$, are necessary to have at the same time $L^g$ of maximal rank and stability of the equilibrium.
\end{example}


\subsection{Time-varying \refy vectors}\label{ssec:timvar}

Time-varying $y_{ij}$ are favorable for state synchronization, as they give access to different partial state information over time. We assume $y_{ij}$ smooth functions of time $\R\rightarrow S^{n-1}$.
Unlike for $k=2$ agents, $f_s$ does not decrease along solutions of~(\ref{alg:all}) in general.
This somewhat complicates the analysis and forces us to introduce a scaling parameter $\varepsilon$ in \eqref{alg:all}, i.e.
\begin{equation}\label{eq:tv}
\tfrac{d}{dt} Q_i =  \varepsilon Q_i \sum_{\{j : (i,j) \in \E\}} \, \sk(Q_i^T M_{ij}(t)\, Q_j) \quad , \quad i=1,2,\ldots,k \, .
\end{equation}
To show convergence to $C_s$, we need a persistent excitation condition on the outputs.
\begin{assumption}\label{assu:1}
$\overline{M}_{ij} := \lim_{t\rightarrow \infty} \; \frac{1}{2t} \int_{-t}^t \; M_{ij}(s) \, ds \;$ exists and is positive definite $\forall (i,j) \in \E$. 
Moreover\footnote{This condition of uniform convergence to $\overline{M}_{ij}$ was not acknowledged in \cite{myCDC}.}, 
there exists $Z : [0,+\infty) \rightarrow [0,+\infty)$ with $\lim_{t\rightarrow \infty} \; \tfrac{1}{t} Z(t) = 0$ 
and such that for all $t$ and $\forall (i,j) \in \E$, it holds $\Vert \int_{t}^{t+T} (M_{ij}(s) - \overline{M}_{ij}) \, ds \Vert_F \leq Z(T)$
for all $T\ge 0$.
\end{assumption}
Assumption~\ref{assu:1} ensures a well-defined ``average system'' (for $\eps=1$)
\begin{equation}\label{eq:av}
\tfrac{d}{dt} \overline{Q}_i = \overline{Q}_i \; \sum_{\{j : (i,j) \in \E\}} \; \sk(\overline{Q}_i^T \overline{M}_{ij} \overline{Q}_j)
\end{equation}
which converges like full-state-coupling case (\ref{FullDyns}), see \cite{ss:08}. For sufficiently small $\varepsilon$,
an averaging argument shows that
this convergence extends to time-varying dynamics~\eqref{eq:tv}.
\begin{proposition}\label{prop:av}
If $\G$ is connected and Assumption~\ref{assu:1} holds, then $C_s$ is locally exponentially stable for \eqref{eq:av}.
\end{proposition}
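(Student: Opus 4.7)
The plan is to treat \eqref{eq:av} as an instance of the gradient framework of Section~\ref{ssec:fix}, now with the positive definite matrices $\overline{M}_{ij}$ playing the role of the rank-one projectors $M_{ij}$, and then to invoke the mechanism behind Theorem~\ref{thm:LgSOn}(a). First I would verify, by the same computation used to derive \eqref{alg:all}, that \eqref{eq:av} is the negative Riemannian gradient on $(SO(n))^k$ of the cost
\[
\bar f_o(\bar Q_1,\ldots,\bar Q_k)=\tfrac12\sum_{(i,j)\in\E}\tr\bigl((\bar Q_i-\bar Q_j)^T\,\overline{M}_{ij}\,(\bar Q_i-\bar Q_j)\bigr),
\]
which has exactly the quadratic form structure of \eqref{rew:SOn} but with an averaged extended Laplacian $\overline L^g$ obtained from the definition of $L^g$ by substituting $\overline M_{ij}$ for each $M_{ij}$.

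The key step is to show $\rk\overline L^g=n(k-1)$, so that $\ker\overline L^g$ coincides with the diagonal $C_s^r$. For $x=(x_1^T,\ldots,x_k^T)^T\in\R^{nk}$ one has
\[
x^T\overline L^g x=\sum_{(i,j)\in\E}(x_i-x_j)^T\overline M_{ij}(x_i-x_j),
\]
and this quantity vanishes iff $x_i=x_j$ for every edge by positive definiteness of the $\overline M_{ij}$ (Assumption~\ref{assu:1}); connectedness of $\G$ then forces $x_1=\cdots=x_k$, giving $\ker\overline L^g=C_s^r$ as required. The proof of Theorem~\ref{thm:LgSOn}(a) only relies on the quadratic form structure of the cost together with the rank condition $\rk L^g=n(k-1)$, so it transports verbatim to the present setting: the zero set $\{\bar f_o=0\}$ equals $C_s$, and Lemma~\ref{lem:BasicConv} then yields local exponential stability of $C_s$ under \eqref{eq:av}.

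The only point that must be verified with some care is that the proof of Theorem~\ref{thm:LgSOn}(a) is not implicitly tied to $\rk M_{ij}=1$; an inspection of that argument confirms that it depends only on the rank of $L^g$ and on the symmetric positive semidefiniteness of the blocks, so no additional ingredient is needed. If anything, the analysis becomes slightly cleaner than in Section~\ref{ssec:fix}: because each $\overline M_{ij}$ is strictly positive definite, $\bar f_o$ directly dominates $\sum_{(i,j)\in\E}\|\bar Q_i-\bar Q_j\|_F^2$ up to a multiplicative constant given by the smallest eigenvalue among the $\overline M_{ij}$, making the reduction to the full-state-coupling analysis of \cite{ss:08} essentially transparent.
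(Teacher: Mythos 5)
Your proof is correct and takes essentially the same route as the paper's: the paper likewise treats \eqref{eq:av} as the gradient flow of an averaged cost for which $C_s$ is an isolated minimum set, and appeals to the arguments of \cite{ss:08} together with ``other results in the paper'' (i.e.\ Lemma~\ref{lem:BasicConv}/Theorem~\ref{thm:LgSOn}) for exponential stability. You simply make explicit what the paper leaves implicit --- that positive definiteness of the $\overline{M}_{ij}$ combined with connectedness of $\G$ yields $\rk \overline{L}^g = n(k-1)$, so the mechanism of Theorem~\ref{thm:LgSOn}(a) applies verbatim.
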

\begin{proof}
\eqref{eq:av} is a gradient system for the average cost
$\overline{f}(Q_1,\ldots,Q_k)  = 
\tfrac{1}{2}\sum_{(i,j)\in\E}^k  \|\overline{M}_{ij}(Q_i-Q_j)\|^2_F.$
A straightforward extension of the arguments in \cite{ss:08} yields that $C_s$ is locally asymptotically stable, because $C_s$ is an isolated minimum set of cost function $\overline{f}$. Exponential stability holds similarly to other results in the paper.
\end{proof}

\begin{theorem} \label{thm:tv}
If $\G$ is connected and Assumption~\ref{assu:1} holds, then for sufficiently small $\varepsilon>0$, $C_s$ is locally exponentially stable for \eqref{eq:tv}.
\end{theorem}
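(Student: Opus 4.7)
The plan is to deduce the result from an averaging argument on manifolds: Proposition~\ref{prop:av} provides local exponential stability of $C_s$ for the averaged system, and Assumption~\ref{assu:1} supplies the uniform sublinear-growth bound on the fluctuations needed to transfer this stability to~\eqref{eq:tv} for small $\varepsilon$. After rescaling time to $\tau=\varepsilon t$, \eqref{eq:tv} becomes $\tfrac{d}{d\tau}Q_i = Q_i\sum_{j}\sk(Q_i^T M_{ij}(\tau/\varepsilon) Q_j)$, whose average over the fast variable $s=\tau/\varepsilon$ equals exactly the vector field of~\eqref{eq:av} by Assumption~\ref{assu:1}; the uniform bound $\Vert \int_t^{t+T}(M_{ij}(s)-\overline M_{ij})\,ds\Vert_F \le Z(T)$ with $Z(T)/T\to 0$ is the standard hypothesis for non-periodic (KBM-type) averaging.

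The next step is a near-identity left-invariant transformation $Q_i = \overline Q_i \exp(\varepsilon W_i(\overline Q,\tau,\varepsilon))$, with $W_i$ built from antiderivatives of the fluctuations $M_{ij}(\cdot)-\overline M_{ij}$. Assumption~\ref{assu:1} forces $\varepsilon W_i$ to be uniformly $O(\varepsilon)$ on compact $\tau$-intervals (since $\varepsilon Z(\tau/\varepsilon)=\tau\cdot Z(\tau/\varepsilon)/(\tau/\varepsilon)\to 0$ as $\varepsilon\to 0$), so the change of variables is a diffeomorphism onto a tube around the diagonal. In the new coordinates the dynamics read $\tfrac{d}{d\tau}\overline Q_i = \overline Q_i\sum_{j}\sk(\overline Q_i^T\overline M_{ij}\overline Q_j) + \varepsilon R_i(\overline Q,\tau,\varepsilon)$ with $R_i$ uniformly bounded on compacts, so the problem reduces to robustness of the averaged exponential stability under an $O(\varepsilon)$ vanishing perturbation. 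This robustness is obtained by a standard converse-Lyapunov plus Gronwall argument, using a smooth Lyapunov function produced near $C_s$ from the analytic gradient structure of~\eqref{eq:av}: its derivative along the perturbed flow retains the strictly negative quadratic term inherited from~\eqref{eq:av} plus an $O(\varepsilon)$ remainder that is dominated once $\varepsilon$ is small enough compared to the decay constant and the tube radius.

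The main delicacy is that $C_s$ is an $\tfrac{n(n-1)}{2}$-dimensional invariant submanifold rather than an isolated equilibrium, so one cannot directly quote a classical ``averaging preserves hyperbolic equilibria'' theorem. I would circumvent this by exploiting the right-invariance w.r.t.\ absolute orientation of both~\eqref{eq:tv} and~\eqref{eq:av} (Section~\ref{ss:debut}): descending to the quotient $(SO(n))^k/SO(n)$ collapses $C_s$ to a single point at which the previous averaging and Lyapunov arguments apply verbatim, and the exponential stability obtained in the quotient then lifts back to local exponential stability of the submanifold $C_s$ in $(SO(n))^k$.
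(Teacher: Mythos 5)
Your high-level strategy --- quotient out the right-invariance so that $C_s$ collapses to a point $p$ of $SO(n)^k/C_s$, then transfer the exponential stability of the averaged system (Proposition~\ref{prop:av}) to \eqref{eq:tv} by an averaging argument --- is exactly the paper's. The paper, however, does not build a near-identity transformation: it directly applies an averaging theorem of Aeyels--Peuteman whose hypothesis is a bound on $\Vert\int_{t_1}^{t_2}(F(x,t)-\overline F(x))\,dt\Vert$ over finite windows. This difference is not cosmetic, because as written your route has a genuine gap.

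The gap is in the final Lyapunov step. You assert that the remainder $R_i$ is ``uniformly bounded on compacts'' and that the Lyapunov derivative is the strictly negative quadratic term plus an $O(\varepsilon)$ remainder ``dominated once $\varepsilon$ is small enough''. An estimate of the form $\tfrac{d}{d\tau}V\le -cV+O(\varepsilon)$ (or even $-cV+O(\varepsilon)\sqrt V$) with a remainder that does not vanish on $C_s$ yields only \emph{practical} stability, i.e.\ convergence to an $O(\varepsilon)$-neighborhood of $C_s$, not local exponential stability of $C_s$ itself. The indispensable structural fact, which your proposal never invokes, is that the fluctuating part of the vector field vanishes on $C_s$ \emph{linearly in the distance}: since $\sk(Q^TPQ)=0$ for symmetric $P$, one has $\Vert\sk(Q_i^T(M_{ij}(t)-\overline M_{ij})Q_j)\Vert_F\le\Vert M_{ij}(t)-\overline M_{ij}\Vert_F\,\Vert Q_i-Q_j\Vert_F$, which combined with Assumption~\ref{assu:1} gives $\Vert\int_{t_1}^{t_2}(F(x,t)-\overline F(x))\,dt\Vert\le k\sqrt{c}\,Z(t_2-t_1)\,\dist_{\M}(x,p)$. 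That factor $\dist_{\M}(x,p)$ is what turns the perturbation into $o(1)\cdot V$ and preserves the exponential rate; without it the theorem is not proved. A second, related difficulty: because $Z$ is only sublinear and may be unbounded, your $W_i$ satisfies only $\varepsilon\Vert W_i\Vert\le\varepsilon Z(\tau/\varepsilon)$, which is $o(1)$ on compact $\tau$-intervals but can diverge as $\tau\to\infty$ for fixed $\varepsilon$; so the transformation is not uniformly near-identity over the infinite horizon that exponential stability requires, and the ``diffeomorphism onto a tube'' plus Gronwall step does not close. This is precisely why the paper works with finite-window integral estimates rather than a global near-identity change of variables; to repair your argument you would need both to restrict the transformation to windows of fixed length and to propagate the $\dist_{\M}(x,p)$ factor through the remainder.
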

\begin{proof} 
We first project the system onto one where $C_s$ is represented by a single point. 
For this we use reductive homogeneous spaces, see e.g.~\cite{ce}. 

$C_s$ is a closed subgroup of $SO(n)^k$.
Define the compact homogeneous space $\M = SO(n)^k / C_s$, 
with $C_s$
acting on $SO(n)^k$ by right multiplication, and canonical projection $\pi\colon SO(n)^k\rightarrow \M$. The image of $C_s$ under $\pi$ is a point $p \in \M$.
Since $SO(n)^k$ is compact, $\M$ is a reductive homogeneous space.
Equip $\M$ with the normal Riemannian metric induced by the product metric on $SO(n)^k$.
Denote $F_{SO}(Q_1,\ldots,Q_k,t)$ and $\overline{F}_{SO}(Q_1,\ldots,Q_k)$ respectively the vector fields corresponding to \eqref{eq:tv} with $\varepsilon = 1$ and to \eqref{eq:av}.
Thanks to invariance of the dynamics under right multiplication of $SO(n)^k$ by $C_s$,
they respectively induce vector fields $F(x,t)$ and $\overline{F}(x)$ on $\M$, where $\overline{F}$ is the time average of $F$. Moreover, for both time-varying and time-invariant dynamics, point $p \in \M$ has the same stability properties as set $C_s \subset SO(n)^k$. So $p$ is exponentially stable under $\overline{F}$ and we must prove that this remains true under $\varepsilon F(x,t)$ for $\varepsilon$ small.
We therefore apply Theorem 3 in \cite{ap:99}.

Since all necessary regularity conditions for Theorem 3 of \cite{ap:99} hold,
it remains to examine $\Vert \int_{t_1}^{t_2} F(x,t)-\overline{F}(x) \, dt \Vert$ for any fixed $x \in \M$ in a neighborhood of $p$.
$$\left\Vert \int_{t_1}^{t_2} F(x,t)-\overline{F}(x) \, dt \right\Vert \leq \left\|\int_{t_1}^{t_2} F_{SO}(Q_1,\ldots,Q_k,t)-\overline{F}_{SO}(Q_1,\ldots,Q_k) \, dt \right\| \;$$
by projection from $T_{Q_1,\ldots,Q_k}SO(n)^k$ to $T_x\M$, with $\pi(Q_1,\ldots,Q_k) = x$. 
Now
\begin{eqnarray}
\nonumber \lefteqn{\left\|\int_{t_1}^{t_2} F_{SO}(Q_1,\ldots,Q_k,t)-\overline{F}_{SO}(Q_1,\ldots,Q_k) \, dt \right\|^2} && \\
& = & 
\nonumber \sum_{i=1}^k \left\Vert \sum_{j=1}^k a_{ij} \, \sk(Q_i^T \, \int_{t_1}^{t_2} (M_{ij}(t)-\overline{M}_{ij}) \; dt\, Q_j) \right\Vert_F^2  \\
& \le &
\label{eq:retakeafter1} k^2 \sum_{i=1}^k \sum_{j=1}^k a_{ij} \left\Vert \sk(Q_i^T \, \int_{t_1}^{t_2} (M_{ij}(t)-\overline{M}_{ij}) \; dt\, Q_j) \right\Vert_F^2 \; . 
\end{eqnarray}
Using $\Vert \sk(Q_i^T P Q_j) \Vert_F = \tfrac{1}{2}\Vert P(I_n - Q_jQ_i^T) - (I_n-Q_jQ_i^T)^T P\Vert_F \leq \Vert P \Vert_F \; \Vert I_n - Q_jQ_i^T \Vert_F = \Vert P \Vert_F \; \Vert Q_i - Q_j \Vert_F$ for any symmetric $P \in \R^{n \times n}$, and Assumption~\ref{assu:1}, we get
\begin{eqnarray*}
[\text{Eq.}\eqref{eq:retakeafter1}]	& \leq & 
	 k^2 \sum_{i=1}^k \sum_{j=1}^k a_{ij} \left\Vert\int_{t_1}^{t_2} (M_{ij}(t)-\overline{M}_{ij}) \; dt\right\Vert^2 \left\Vert Q_i-Q_j \right\Vert_F^2\\
	&\le &
	k^2 \; Z(t_2-t_1)^2 \; \sum_{i=1}^k \sum_{j=1}^k a_{ij}\, \Vert Q_i-Q_j \Vert_F^2 \; \leq \; k^2 Z(t_2-t_1)^2 \sum_{i=1}^k \sum_{j=1}^k\, \Vert Q_i-Q_j \Vert_F^2  .
\end{eqnarray*}
Since $\sum_{i,j=1}^k \Vert Q_i - Q_j\Vert_F^2$ is a quadratic form on $(\R^{n\times n})^k$ with kernel
$C_s^r=\{(X,\ldots,X) : X\in\R^{n\times n}\}$, there is a constant $c>0$ such that, with $\operatorname{dist}_{(\R^{n\times n})^k}$ the Euclidean distance,
$\sum_{i,j=1}^k \Vert Q_i - Q_j\Vert_F^2\le c\operatorname{dist}^2_{(\R^{n\times n})^k}((Q_1,\ldots,Q_k),C_s^r)$. 
As $C_s\subset C_s^r$ and we use the Riemannian metric on $SO(n)$ induced by the Euclidean metric on $\R^{n\times n}$, we have
$\operatorname{dist}_{(\R^{n\times n})^k}((Q_1,\ldots,Q_k),C_s^r)\le \operatorname{dist}_{SO(n)^k}((Q_1,\ldots,Q_k),C_s)$ with
$\operatorname{dist}_{SO(n)^k}$ the Riemannian distance on $SO(n)^k$. This yields
$\sum_{i,j=1}^k \Vert Q_i - Q_j\Vert_F^2 \le c\, \operatorname{dist}^2_{SO(n)^k}((Q_1,\ldots,Q_k),C_s) = c\, \operatorname{dist}^2_{\M}(x,p)$
with $p=\pi(C_s)$.
Then overall,
$\; \left\Vert \int_{t_1}^{t_2} F(x,t) - \overline{F}(x)\, dt \right\Vert \leq k\sqrt{c} \, Z(t_2-t_1) \linebreak \operatorname{dist}_{\M}(x,p)\;$.
Using local charts around $p$, this shows that the last condition of Theorem 3 in \cite{ap:99} holds, 
ensuring local exponential stability of $p$ under $F(x,\tfrac{1}{\varepsilon}t)$ for $\varepsilon$ sufficiently small. We conclude by change of timescale: $\tfrac{d}{dt}x = \varepsilon \, F(x,t)$ is equivalent to $\tfrac{d}{d\tau}x = F(x,\tfrac{1}{\varepsilon}\tau)$.
\end{proof}
\vspace{2mm}

It is possible to give quantitative estimates for $\varepsilon$ based on results in \cite{ap:99},
but these technical aspects are not the focus of the present paper.

\begin{remark}
The conditions of Theorem~\ref{thm:tv} can be relaxed to require 
strict positive definiteness of $\overline M_{ij}$, in Assumption~\ref{assu:1}, only on the edges of a spanning tree $\mathcal{G}_s$ of the interaction graph $\mathcal{G}$. Indeed, taking only edges of $\mathcal{G}_{\text{tree}}$, the result of Proposition~\ref{prop:av} directly applies; in particular, $C_s$ is an isolated set of minima for the cost $\overline{f}_{\text{tree}}$ of the average gradient system associated to $\mathcal{G}_{\text{tree}}$ and is locally exponentially stable for that average system. Adding the contribution of edges in $\mathcal{G}\setminus \mathcal{G}_{\text{tree}}$, for which averaged coupling is only partial, $C_s$ remains an isolated minimum set for the cost $\overline{f}$ of the average system associated to $\mathcal{G}$; then it is still locally exponentially stable under the gradient system, so that the conclusion of Proposition~\ref{prop:av} still holds. The proof of Theorem~\ref{thm:tv} can be repeated as such.

It should also be possible to extend the result to networks where a majority of fixed $y_{ij}$ are combined with a few time-varying $y_{ij}$ that enhance the connection on particular links: it suffices to show that $C_s$ is locally exponentially stable for the average system, and then the conclusion of Theorem~\ref{thm:tv} holds.
\end{remark}


\section{State synchronization on $SO(3)$} \label{sec:SO3}

We pursue the analysis for fixed $y_{ij}$ on special case $SO(3)$, which has practical importance as describing rotations in three-dimensional space. It is also the simplest non-trivial output consensus setting.
$Q_i^T y_{ij} = Q_j^T y_{ij}$ fixes $Q_i Q_j^T$ up to a rotation with axis $y_{ij}$, so the coupling on one edge contains $2$-dimensional information about $3$-dimensional relative errors $Q_i Q_j^T \in SO(3)$.
We give further conditions to have $C_o = C_s$, study error sensitivity highlighting poor robustness when $y_{ij}$ are relative positions in $\R^n$, and illustrate with simulations.


\subsection{Weaker conditions for $C_o = C_s$}\label{ssec:nominal}

The conditions of Theorem~\ref{thm:LgSOn} give too little insight into respective roles of graph and \refy vectors to be efficiently used for network design. The following conditions, weaker than Theorem~\ref{thm:LgSOn}(a), ensure $C_o = C_s$ on $SO(3)$ for small $k$.
These small networks can be used as building blocks for larger networks in which $C_o = C_s$ with generic \refy vectors.\vspace{2mm}

\begin{theorem}\label{prop:SO3:1}
Consider $k$ agents applying (\ref{alg:all}) on $SO(3)$ with fixed $y_{ij}$.
\newline (a) If $k=3$, $\E=\{(1,2),(1,3),(2,3)\}$ and $\;\rk(y_{12}, y_{13}, y_{23}) = \rk(y_{12}, y_{13}) = \linebreak \rk(y_{12}, y_{23}) = \rk(y_{13}, y_{23}) = 2$, then $C_o = C_s$.
\newline (b) If $k=3$, $\E=\{(1,2),(1,3),(2,3)\}$ and $\;\rk(y_{12}, y_{13}, y_{23}) = 3$, then $C_o \not= C_s$. However, output synchronization still implies state synchronization \emph{locally}.
\newline (c) If $k=4$, $\E \supseteq \{(1,2),\,(1,3),\,(1,4),\,(2,3),\,(2,4)\}$,  $\rk(y_a, y_b, y_c) = 3$ for each triple of different $a,b,c \in \E$, and $\frac{(y_{12} \times y_{23})^T(y_{12} \times y_{13})}{y_{23}^T(y_{12} \times y_{13})} \neq \frac{(y_{12} \times y_{24})^T(y_{12} \times y_{14})}{y_{24}^T(y_{12} \times y_{14})}$, then $C_o = C_s$.
\end{theorem}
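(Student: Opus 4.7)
The plan is to reformulate output synchronization on each edge $(i,j)$ as the condition $R_{ij} := Q_j Q_i^T \in \st(y_{ij})$, i.e.~as saying that the relative rotation $R_{ij}$ is a rotation around axis $y_{ij}$; on any $3$-clique the identity $R_{13} = R_{23}R_{12}$ holds by definition. Each of (a)--(c) then amounts to showing that these axis-constraints together with the triangle consistencies force $R_{ij}=I$ on every edge.

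For (a) I would use a reflection argument. Let $P$ be the common plane containing $y_{12}, y_{13}, y_{23}$ and $\sigma \in O(3)$ the reflection through $P$; since each $y_{ij} \in P$, conjugation satisfies $\sigma R_{y_{ij}}(\theta)\sigma^{-1} = R_{y_{ij}}(-\theta)$. Applying $\sigma\cdot\sigma^{-1}$ to $R_{13} = R_{23}R_{12}$ yields $R_{13}^{-1} = R_{23}^{-1}R_{12}^{-1}$, equivalently $R_{13} = R_{12}R_{23}$, so $R_{12}$ and $R_{23}$ commute. In $SO(3)$ non-trivial commuting rotations must either share their axis (excluded by $\rk(y_{12},y_{23})=2$) or be $\pi$-rotations around perpendicular axes; in the latter case $R_{23}R_{12}$ would be a $\pi$-rotation with axis normal to $P$, incompatible with $y_{13} \in P$. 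Hence $R_{12}=I$ or $R_{23}=I$, and either way the remaining relative rotation is simultaneously around two linearly independent axes among $\{y_{12},y_{13},y_{23}\}$ and so equals $I$.

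For (b) I would treat the local and global claims separately. For the local part, write $Q_i = Q_0\exp(\xi_i)$ near an arbitrary consensus point and set $\tilde y_{ij} = Q_0^T y_{ij}$. Output synchronization gives $\exp(\xi_j-\xi_i+O(\xi^2))\,\tilde y_{ij} = \tilde y_{ij}$, so to first order $\xi_j - \xi_i = \lambda_{ij}\tilde y_{ij}$ under $\so(3)\cong\R^3$. The cocycle $\xi_3-\xi_1 = (\xi_2-\xi_1)+(\xi_3-\xi_2)$ then reads $\lambda_{13}\tilde y_{13} = \lambda_{12}\tilde y_{12}+\lambda_{23}\tilde y_{23}$, forcing every $\lambda_{ij}=0$ by linear independence. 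Hence $T_{(Q_0,Q_0,Q_0)}C_o = T_{(Q_0,Q_0,Q_0)}C_s$; as both sets are smooth $3$-dimensional with $C_s\subseteq C_o$, they agree in a neighborhood. For the global claim I would construct a non-trivial solution explicitly: with unit quaternions $q_{ij} = \cos\alpha_{ij}+y_{ij}\sin\alpha_{ij}$, the relation $q_{23}q_{12} = q_{13}$ splits into a scalar and a vector equation; expanding $y_{13} = a\,y_{12}+b\,y_{23}+c\,(y_{23}\times y_{12})$ with $c\neq 0$ by linear independence, the vector equation becomes a solvable system in $\sin\alpha_{ij},\cos\alpha_{ij}$ admitting a non-trivial root with $\tan\alpha_{12} = c/b$.

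For (c) the strategy is to apply the quaternion computation of (b) to each of the two triangles $(1,2,3)$ and $(1,2,4)$ sharing edge $(1,2)$. Each triangle admits only two values for the shared $R_{12}$ under output synchronization: the identity, and a unique non-trivial rotation $R_{y_{12}}(2\alpha_{12}^{(k)})$, $k\in\{3,4\}$. A direct computation using $(y_{23}\times y_{12})\cdot(y_{12}\times y_{1k}) = (y_{12}\cdot y_{12})(y_{23}\cdot y_{1k}) - (y_{23}\cdot y_{12})(y_{12}\cdot y_{1k})$ and similar Lagrange-type identities rewrites $\cot\alpha_{12}^{(k)}$ as exactly the ratio $\phi_k$ in the hypothesis. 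The inequality $\phi_3\neq\phi_4$ therefore rules out a shared non-trivial value for $R_{12}$, forcing $R_{12}=I$, i.e.~$Q_1=Q_2$. Substituting this into each triangle's consistency yields $R_{13}=R_{23}$ (resp.~$R_{14}=R_{24}$), a rotation simultaneously around two linearly independent axes and hence equal to $I$. Therefore $Q_1=Q_2=Q_3=Q_4$.

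The main obstacle I anticipate is the explicit algebraic identification in (c) of $\cot\alpha_{12}^{(k)}$ with the specific cross-product ratio $\phi_k$ stated in the hypothesis; this is a careful but routine computation with the quaternion product and cross/triple products of the $y$'s. Secondary delicate points are handling degenerate configurations (e.g.~$b=0$, giving $\alpha_{12}^{(k)} = \pi/2$) and, in (a), explicitly ruling out the orthogonal-$\pi$-rotations branch of the commutativity alternative.
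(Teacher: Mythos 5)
Your proposal is correct, and its overall skeleton for (c) matches the paper's (reduce to the two triangles sharing edge $(1,2)$, invoke the dichotomy from (b) on each, and show the two non-trivial branches would force the forbidden equality of ratios), but the techniques you use in (a) and (b) are genuinely different from the paper's. For (a), the paper writes $y_{23}=a\,y_{12}+b\,y_{13}$ with $a,b\neq 0$ and uses the invariance $y_{12}^TR_3^Ty_{23}=y_{12}^Ty_{23}$ to conclude $R_3y_{12}=y_{12}$, hence $R_3$ has two independent $+1$-eigenvectors; your reflection-conjugation argument ($\sigma R_{13}\sigma^{-1}=R_{13}^{-1}$, hence $R_{12}$ and $R_{23}$ commute) is slicker and more conceptual, at the price of having to dispose of the perpendicular-$\pi$-rotation branch, which you do correctly. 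For (b), the paper proves the stronger global statement that $C_o$ consists of exactly two separated orbits (via the two intersection points of the circles $\{y:y_{12}^Ty=y_{12}^Ty_{23}\}$ and $\{y:y_{13}^Ty=y_{13}^Ty_{23}\}$ on $S^2$) and gets the local claim as a corollary; you instead get the local claim directly by linearization (the cocycle $\lambda_{13}\tilde y_{13}=\lambda_{12}\tilde y_{12}+\lambda_{23}\tilde y_{23}$ with rank $3$ forces $\lambda_{ij}=0$) — to make "both sets are smooth $3$-dimensional" rigorous you should note that the kernel computation shows the output map is a submersion at consensus points, so $C_o$ is locally a $3$-manifold containing $C_s$. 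Your quaternion construction of the non-trivial solution is sound: the vector part of $q_{23}q_{12}$ being parallel to $y_{13}=a\,y_{12}+b\,y_{23}+c\,(y_{23}\times y_{12})$ with $c\neq 0$ gives $\cot\alpha_{12}=b/c$, $\cot\alpha_{23}=a/c$, and uniqueness of the non-trivial branch (needed in (c)) falls out of the same system. Finally, the identification $\cot\alpha_{12}^{(k)}=\phi_k$ that you flag as the main obstacle does check out: $b/c=\frac{(y_{12}\times y_{1k})^T(y_{12}\times y_{2k})}{y_{1k}^T(y_{2k}\times y_{12})}$ and the denominator equals $y_{2k}^T(y_{12}\times y_{1k})$ by cyclicity of the triple product, so your route arguably explains where the paper's cross-product ratio comes from; the paper obtains the same ratio by computing $a_3/a_2=b_3/b_2$ in adapted orthonormal frames. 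The one thing the paper's global description of $C_o$ in (b) buys that your local argument does not is direct reuse in the error-sensitivity analysis (Theorem~\ref{prop:rob}), which is built on the same circle-intersection picture.
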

\begin{proof}
Define relative states $R_j = Q_j Q_1^T$, $j = 2,\ldots,k$. 
State synchronization means $R_j = I_3$, $j = 2,\ldots,k$. 
For (a) and (b), output synchronization means $R_2 y_{12} = y_{12}$, $R_3 y_{13} = y_{13}$ and $R_3 R_2^T y_{23} = y_{23}$. 
For (c), output synchronization requires these on 
subnetwork $A$ of agents $\{1,2,3\}$ and the same conditions on subnetwork $B$ of agents $\{1,2,4\}$.

(a) Output synchronization implies $y_{12}^T R_3^T y_{23} = y_{12}^T R_3^T R_3 R_2^T y_{23} = y_{12}^T y_{23}$. 
The rank conditions yield $y_{23} = a y_{12} + b y_{13}$, with $a, b \in \R\setminus\{0\}$. Thus $y_{12}^T R_3^T y_{23} - y_{12}^T y_{23} = a (y_{12}^T R_3 y_{12} -1) = 0$ which implies $R_3 y_{12} = y_{12}$.
Then $R_3$ has two linearly independent eigenvectors for the eigenvalue $1$, hence $R_3 = I_3$ (see proof of Theorem~\ref{thm:inj}). 
A similar argument yields $R_2 = I_3$.

(b) For any $x \in S^2$ we have $\{R\, x : R \in \st(y_{12})\} = \{y \in S^2 : y_{12}^T\, y = y_{12}^T\, x\}$, and for $x \neq \pm y_{12}$ there is a bijection between $R$ and $y$. 
The rank conditions allow to write $y_{23} = a_1 \e_1 + a_2 \e_2 + a_3 \e_3$ in orthonormal frame $(\e_1,\e_2,\e_3):=(y_{12},\tfrac{y_{12}\times y_{13}}{\Vert y_{12}\times y_{13}\Vert_2}, \tfrac{y_{12} \times (y_{12} \times y_{13})}{\Vert y_{12} \times y_{13}\Vert_2})$, with $a_2 \neq 0$. Define $y_* = a_1 \e_1 - a_2 \e_2 + a_3 \e_3$.  Since $y_{12}^T y_* = y_{12}^T y_{23} \neq \pm 1$, there is a unique $R_{2*} \in \st(y_{12})\setminus{I_3}$ such that $R_{2*}^T y_{23} = y_*$.
Similarly, using $y_{13}^T (y_{12} \times y_{13}) = 0$, there is a unique $R_{3*} \in \st(y_{13})\setminus{I_3}$ such that $R_{3*} y_{23} = y_*$. Then $R_{3*} R_{2*}^T y_{23} = y_{23}$, so $R_{2*}$ and $R_{3*}$ differ from $I_3$ but satisfy all conditions of output synchronization, i.e.~$C_o\setminus C_s \neq \emptyset$.
Furthermore, states in $C_o \setminus C_s$ all correspond to the same relative state $(R_{2*},R_{3*})$. Indeed, assume $y_{\circ} = R_{2\circ}^T \, y_{23} = R_{3\circ}^T \, y_{23}$ for some $(R_{2\circ},R_{3\circ}) \in \st(y_{12}) \times \st(y_{13})$, characterizing output synchronization. The linearly independent requirements $y_{12}^T y_{\circ} = y_{12}^T y_{23}$ and $y_{13}^T y_{\circ} = y_{13}^T y_{23}$ have two solutions in $S^2$: $y_{23}$ and $y_*$. As each admissible $y_{\circ}$ maps to unique $(R_{2\circ},R_{3\circ})\in \st(y_{12}) \times \st(y_{13})$, the only possible $(R_{2\circ},R_{3\circ})$ are $(I_3,I_3)$ and $(R_{2*},R_{3*})$. Thus $C_o$ consists of two separated sets: $C_s$ and $C_o\setminus C_s = \{ (Q, R_{2*}Q , R_{3*}Q ) : Q\in SO(3)\}$.

(c) From case (b), output synchronization in subnetwork $A=\{1,2,3\}$ requires $(R_2,R_3) \in \{(R_{2*},R_{3*}),(I_3,I_3)\}$, with $I_3 \notin \{R_{2*},R_{3*}\}$. A similar argument yields constraint $(R_2,R_4) \in \{(R_{2\bigstar},R_{4*}),(I_3,I_3)\}$, with $I_3 \notin \{ R_{2\bigstar},R_{4*}\}$, for output synchronization in subnetwork $B=\{1,2,4\}$; hereafter we characterize $(R_{2\bigstar},R_{4*})$ by $y_{\bigstar} \in S^2$. Output synchronization in the whole network combines these conditions. Then consistency for $R_2$ requires either $R_2=I_3$ (implying $R_3=R_4=I_3$, state synchronization) or $R_2=R_{2\bigstar} = R_{2*}$. We show that the latter is impossible.
\newline Let $(a_1,a_2,a_3)$ coordinates of $y_{23}$ as in (b), in particular $a_2\neq 0$. Write $y_{24} = b_1 \e_1 + b_2 \e_4 + b_3 \e_5$ in orthonormal frame $(\e_1,\e_4,\e_5)=(y_{12},\tfrac{y_{12}\times y_{14}}{\Vert y_{12}\times y_{14}\Vert_2}, \tfrac{y_{12} \times (y_{12} \times y_{14})}{\Vert y_{12} \times y_{14} \Vert_2})$, with $b_2\neq 0$ by the same argument as in (b). Define $R \in \st(y_{12})$  with $(\e_1,\e_4,\e_5)=R \,(\e_1,\e_2,\e_3)$.
Note that $R_{2*}^T (y_{23}-a_1 \e_1) = (y_*-a_1 \e_1) = (a_2 \e_2 + a_3 \e_3)$ and $R^T  R_{2\bigstar}^T (y_{24}-b_1 \e_1) = R^T (y_{\bigstar}-b_1 \e_1) = (b_2 \e_2 + b_3 \e_3)$. 
Assume $R_{2 \bigstar} = R_{2*}$.
Then $\; \; R_{2*}^T (y_{23}-a_1 \e_1) \times  R^T  R_{2*}^T (y_{24}-b_1 \e_1) $
\begin{eqnarray*}
\!\!\!&& = (a_2 \e_2 + a_3 \e_3) \times (b_2 \e_2 + b_3 \e_3) = m \e_1 = R_{2*}^T\, m\e_1 = R_{2*}^T\, ( \, \vphantom{\sum}(a_2 \e_2 + a_3 \e_3) \times (b_2 \e_2 + b_3 \e_3) \,) \\ 
\!\!\!&& = (R_{2*}^T(a_2 \e_2 + a_3 \e_3)) \times (R_{2*}^T(b_2 \e_2 + b_3 \e_3)) = (-a_2 \e_2 + a_3 \e_3) \times (-b_2 \e_2 + b_3 \e_3) \; .
\end{eqnarray*}
The first and last expressions yield $\tfrac{a_3}{a_2} = \tfrac{b_3}{b_2}$.
Expressing $a_2$, $a_3$, $b_2$, $b_3$ by scalar products leads to $\frac{(y_{12} \times y_{23})^T(y_{12} \times y_{13})}{y_{23}^T(y_{12} \times y_{13})} = \frac{(y_{12} \times y_{24})^T(y_{12} \times y_{14})}{y_{24}^T(y_{12} \times y_{14})}$, violating the last assumption of the statement.
Thus $R_{2 \bigstar} = R_{2*}$ is impossible and $C_o \setminus C_s = \emptyset$.\end{proof}\vspace{2mm}

\begin{remark}\label{rem:SO3thm}
(i) Rank conditions in Theorem~\ref{prop:SO3:1}(b) and (c) are only violated on a set of measure zero for freely chosen $y_{ij} \in S^2$. 
Situation (a) --- coplanar but non-aligned $y_{12}, y_{13}, y_{23}$ --- is only violated on a set of measure zero for $y_{ij} = \tfrac{y_i-y_j}{\pm \Vert y_i-y_j \Vert_2}$, e.g.~\refy vectors which are ``relative positions'', with freely chosen ``positions'' $y_i \in R^3$. 
In Section~\ref{ssec:robustness} we show that state synchronization with (a) is ill-conditioned. Simulations tend to indicate that this is characteristic of \refy vectors defined as ``relative positions''.\newline
(ii) If $k=4$ agents are not connected as in Theorem~\ref{prop:SO3:1}(c), then generically there is a continuum of relative states corresponding to $C_o \setminus C_s$. Indeed: either one agent has a single link and therefore the necessary condition of Theorem~\ref{thm:inj} is not satisfied, 
or agents are connected in square like on Fig.~\ref{fig:RnEx}. In the latter case, output synchronization means $R_1= Q_1 Q_2^T \in \st(y_{12})$, $R_2= Q_2 Q_3^T \in \st(y_{23})$, $R_3= Q_1 Q_4^T \in \st(y_{14})$, $R_4= Q_4 Q_3^T \in \st(y_{34})$ with consistency relation $R_1 R_2 = R_3 R_4$. For generic $y_{ij}$, the first set of constraints allows $R_1 R_2$ and $R_3 R_4$ to span two $2$-dimensional manifolds in $3$-dimensional $SO(3)$; their intersection is generically $1$-dimensional. Thus relative states $(R_1,R_2,R_3)$ characterizing $C_o$ span a one-dimensional manifold (whereas $C_s$ corresponds to $R_1=R_2=R_3=I_3$). 
\end{remark}

\begin{example}\label{Ex:AllComps} The general theorems of Section~\ref{ssec:fix} relate to Theorem~\ref{prop:SO3:1} as follows.\newline
$\bullet$ For the setting of Theorem~\ref{prop:SO3:1}(c),
$\rk L^g \leq \#\E = 6 < n (k-1) = 9$, so the condition of Theorem~\ref{thm:LgSOn}(a) is not satisfied. One checks that the condition of 
Theorem~\ref{thm:LgSOn}(b) holds.\newline
$\bullet$ For the setting of Theorem~\ref{prop:SO3:1}(b), 
condition of Theorem~\ref{thm:LgSOn}(a) is a fortiori still not satisfied, but the condition of Theorem~\ref{thm:LgSOn}(b) holds and ensures that $C_s$ is locally asymptotically stable, despite $C_o \neq C_s$.
In fact $C_o$ has two separated sets, one of which is $C_s$.\newline
$\bullet$ Take, as an example setting of Theorem~\ref{prop:SO3:1}(a), the situation of Example~\ref{ex:nocon} but with \refy vectors $y_{12} = y_{13} = (1,0,0)^T$ and $y_{23} = (0,1,0)^T$.
From Theorem~\ref{prop:SO3:1}(a) and Proposition~\ref{prop:ocstable}, $C_s = C_o$ is locally asymptotically stable. 
However, the condition of Theorem~\ref{thm:LgSOn}(b) is NOT satisfied, so convergence cannot be exponential. \newline
The condition of Theorem~\ref{thm:LgSOn}(a) appears to be too strong for small networks.
\end{example}

Theorem~\ref{prop:SO3:1}(c) gives a generic condition for $C_o = C_s$ on a small graph.
The following algorithm exploits this condition by collapsing repeatedly subnetworks having the graph structure as in~\ref{prop:SO3:1}(c)
to single nodes. If the resulting graphs consists of a single point, then generic \refy vectors yield state synchronization.

\vspace{1ex}
\begin{algo}
\begin{enumerate}\tightlist
\item[$\bullet$] As long as there are
$i,l,m,p \in \V$ with 
$(i,l), (i,m), (p,l), (p,m), (m,l) \in \E$
\begin{enumerate}\tightlist
\item Collapse $i,l,m,p$ to a single vertex $i$ --- the resulting graph can \linebreak have several edges connecting two vertices;
\item As long as there are vertices $i,j$ connected by more than one edge:
\begin{enumerate}\tightlist
\item[] Collapse $i,j$ into a single vertex $i$;
\end{enumerate}
\end{enumerate}
\item[$\bullet$] Output the resulting graph;
\end{enumerate}
\end{algo}

\begin{corollary}\label{cor:algo}
If the graph analysis algorithm above reduces $\G(\V,\E)$ to a single vertex, then $C_o = C_s$ for a generic set of \refy vectors $y_{ij}$.
\end{corollary}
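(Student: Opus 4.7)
The plan is to proceed by induction on the number of algorithm steps, maintaining the invariant that every collapsed super-vertex corresponds to a maximal set of original agents whose states must coincide whenever the whole configuration lies in $C_o$. The base case is trivial; if the algorithm reduces $\G$ to a single vertex the invariant gives $C_o\subseteq C_s$, and combined with the automatic inclusion $C_s\subseteq C_o$ this yields the claim.

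For the inductive step I treat the two operations separately. Operation (a) collapses $\{i,l,m,p\}$ whose five internal edges $(i,l),(i,m),(p,l),(p,m),(m,l)$ match exactly the hypothesis edge set of Theorem~\ref{prop:SO3:1}(c), with the two ``central'' vertices $l,m$ playing the role of $1,2$ and the two ``outer'' vertices $i,p$ the role of $3,4$. The rank assumptions and the scalar-triple-product inequality required by that theorem are polynomial (non-)identities in the five involved $y_e$, so they hold off a closed proper algebraic subvariety of $(S^2)^5$, which has Lebesgue measure zero. On that complement, output synchronization on the five edges forces $Q_i=Q_l=Q_m=Q_p$, so the collapse is consistent with the invariant. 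Operation (b) collapses two vertices $i,j$ joined by at least two parallel edges; such multi-edges only appear after earlier operations have merged vertices, and their labels are precisely the original, generically chosen \refy vectors. For generic data two of these vectors are linearly independent, and output synchronization then forces $Q_j Q_i^T\in SO(3)$ to fix a two-dimensional subspace, so $Q_j Q_i^T=I_3$ by the same argument as in the proof of Theorem~\ref{thm:inj}.

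The main subtlety I anticipate is the bookkeeping of genericity across iterations: each step imposes a finite list of polynomial non-vanishing conditions on the original $(y_{ij})_{(i,j)\in\E}$ only, since collapsed super-vertices simply inherit their incident edges together with the original \refy vectors and no quantities computed from the agent states ever enter the conditions. The algorithm being finite, the union of these exclusion sets remains a proper algebraic subset of $\prod_{(i,j)\in\E} S^2$, hence of Lebesgue measure zero. For $(y_{ij})$ in the complement of this set, the induction goes through: if the algorithm reduces $\G$ to a single vertex, then every two original agents belong to a common collapsed super-vertex, so $C_o\subseteq C_s$ and therefore $C_o=C_s$.
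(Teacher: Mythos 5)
Your proposal is correct and follows essentially the same route as the paper's own proof: repeated collapsing justified by Theorem~\ref{prop:SO3:1}(c) for the five-edge four-vertex pattern and by the ``two linearly independent \refy vectors force $Q_jQ_i^T=I_3$'' argument for parallel edges, with genericity coming from the fact that each step excludes only a measure-zero algebraic subset of the original $y_{ij}$. Your version is somewhat more explicit than the paper's about the induction invariant and the finite union of exclusion sets, but the underlying argument is the same.
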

\begin{proof}
If agents $i,s$ are constrained to $Q_i=Q_s$ for a network state in $C_o$, then all their links with the rest of the network are constraints on the same $Q \in SO(3)$. Thus we can collapse $i,s$ into one agent and attribute it all edges incident from $i$ or $s$. 
In addition, if agent $j$ was connected to both $i$ and $s$, then 
there is now a double-constraint $R \, y_{ij} = y_{ij}$ and $R \, y_{is} = y_{is}$ on $R = Q_j Q_i^T = Q_s Q_i^T \in SO(3)$; this generically requires $R = I_3$, so $j$ also belongs to the agents with orientations identical to $Q_i$ at $C_o$.
Our algorithm consecutively collapses agents in this way, according to (i) Theorem~\ref{prop:SO3:1}(c) on $4$-agent groups, assuming generic $y_{ij}$, and (ii) the double-constraint implication just recalled. If this procedure can be applied until one agent remains, then $C_o = C_s$.
\end{proof}

\begin{figure}[t]
	\begin{center}\setlength{\unitlength}{0.7mm}
		\begin{picture}(110,24)
			\multiput(0,12)(20,0){3}{\circle*{3}}  \multiput(70,12)(20,0){3}{\circle*{3}}
			\multiput(10,2)(20,0){2}{\circle*{3}} \multiput(10,22)(20,0){2}{\circle*{3}} \multiput(80,2)(20,0){2}{\circle*{3}} \multiput(80,22)(20,0){2}{\circle*{3}}
			\multiput(10,2)(20,0){2}{\line(1,1){10}} \multiput(80,2)(20,0){2}{\line(1,1){10}}
			\multiput(0,12)(20,0){2}{\line(1,1){10}} \multiput(70,12)(20,0){2}{\line(1,1){10}}
			\multiput(0,12)(20,0){2}{\line(1,-1){10}} \multiput(70,12)(20,0){2}{\line(1,-1){10}}
			\multiput(10,22)(20,0){2}{\line(1,-1){10}} \multiput(80,22)(20,0){2}{\line(1,-1){10}}
			\multiput(0,12)(70,0){2}{\line(1,0){20}} \put(30,2){\line(0,1){20}}
			\put(-5,0){A.} \put(65,0){B.}
		\end{picture}
	\end{center}
	\caption{Non-rigid graphs for which (A.) $C_o = C_s$ from Corollary~\ref{cor:algo} and (B.) $C_o \neq C_s$.}\label{fig:graphs}
\end{figure}
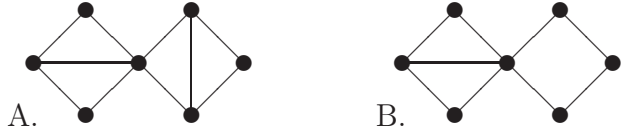

\begin{remark} The condition expressed by our graph analysis algorithm differs from planar formation rigidity, see e.g.~\cite{HendrickxThesis} for a recent study. With $3$ agents, rigidity is a weaker condition as it holds for graphs satisfying the condition of Theorem~\ref{prop:SO3:1}(b). 
With $4$ agents, a graph satisfying Theorem~\ref{prop:SO3:1}(c) is minimal both for (local) rigidity and for $C_o = C_s$. With more agents, our objective becomes easier to achieve: e.g.~the graph on Fig.\ref{fig:graphs}.A ensures $C_o = C_s$ but is clearly not rigid.\newline 
\end{remark}


\subsection{Error sensitivity}\label{ssec:robustness}

Achieving $C_s = C_o$ with less agents for coplanar $y_{ij}$ (Theorem~\ref{prop:SO3:1}(a)) than for linearly independent $y_{ij}$ (Theorem~\ref{prop:SO3:1}(b)) may seem surprising. 
In fact, the two solutions $(Q_2 Q_1^T, Q_3 Q_1^T) \in \{ (I_3,I_3),\, (R_{2*},R_{3*}) \}$ for output synchronization in the proof of Theorem~\ref{prop:SO3:1}(b), correspond to the intersection of circles $\{R_2 y_{23} : R_2 \in \st(y_{12})\}$ and $\{R_3 y_{23} : R_3 \in \st(y_{13})\}$
on $S^2$. 
In the coplanar case, the circles are tangent: this leaves $(I_3,I_3)$ as only solution, but also indicates a possible loss of hyperbolicity
of the dynamical system, which can have undesirable consequences. 
The following formalizes this on basis of error sensitivity. 
In Section~\ref{ssec:simus}, simulations further illustrate the effect on the convergence speed.\vspace{2mm}

We consider measurement errors replacing $Q_i^T y_{ij}$ by
\begin{equation}\label{err-out}
Q_i^T E_{ij} y_{ij}
\end{equation}
with fixed perturbations $E_{ij} \in \{R \in SO(3) : \Vert R-I_3 \Vert_F < \eps \ll 1 \}$. 
The actual closed-loop evolution with these perturbed measurements,
\begin{equation}\label{eq:NoiseDyns}
\tfrac{d}{dt} Q_i = Q_i \sum_{\{j : (i,j) \in \E\}} \, \sk(Q_i^T\, E_{ij} M_{ij} E_{ji}^T\, Q_j) \quad , \quad i=1,2,\ldots,k \; ,
\end{equation}
is the gradient of $f_{o-e} = \sum_{(i,j) \in \E} \, (1-\tr(Q_i^T E_{ij} M_{ij} E_{ji}^T Q_j))$. 
A priori, $E_{ij} \neq E_{ji}$ breaks the output map symmetry and $Q_i = Q_j$ does not yield synchronization of the perturbed outputs. 
For clarity, we denote $C_{o-e}(E_{12},\ldots,E_{k\;(k-1)})$ the output consensus set with perturbed outputs (\ref{err-out}). 
For a large number of
edges, $f_{o-e}$ is strictly positive over $(SO(n))^k$ for generic \refy vectors and $E_{ij}$,
meaning that $C_{o-e} = \emptyset$.
We define 
$$\dist^2(C_A , C_B ) := \min_{\substack{(Q_{1A},Q_{2A},\ldots,Q_{kA}) \in C_A,\\ (Q_{1B},Q_{2B},\ldots,Q_{kB}) \in C_B}} \; \sum_{i=1}^k \Vert Q_{iA} - Q_{iB} \Vert_F^2 \, .$$

\begin{theorem}\label{prop:rob}
Consider $k=3$ agents applying (\ref{eq:NoiseDyns}) on $SO(3)$. 
Denote by $E_\eps$ the set $\{(E_{12},\ldots,E_{32}) : \|I_3-E_{ij}\|_F < \eps \; \forall i,j \}\subset
(SO(3))^{k(k-1)}$.
\newline (a) In setting of Theorem~\ref{prop:SO3:1}(a), there is a constant $c>0$ such that for any sufficiently small $\eps>0$ there is a non-zero measure set
$S_\eps\subset E_\eps$ where 
$C_{o-e}(E_{12},\ldots,E_{32})$ is nonempty and
$\; \dist(C_s,C_{o-e}(E_{12},\ldots,E_{32})) \ge c \sqrt{\eps} \;$ for all $(E_{12},\ldots,E_{32})\in S_\eps$. 
\newline
Furthermore, for any $\eps>0$ there is also a non-zero measure set $N_\eps \subset E_\eps$ such that $C_{o-e}(E_{12},\ldots,E_{32}) =\emptyset$ for all $(E_{12},\ldots,E_{32})\in N_\eps$.
\newline (b) In setting of Theorem~\ref{prop:SO3:1}(b), there is a constant $c > 0$ such that, for any sufficiently small $\eps>0$, $C_{o-e}(E_{12},\ldots,E_{32})$ is non-empty and $\; \dist(C_s, C_{o-e}(E_{12},\ldots,E_{32}))\le c\, \eps \;$ for all $(E_{12},\ldots,E_{32})\in E_\eps$.
\end{theorem}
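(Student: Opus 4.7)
My approach is to recast $C_{o-e}$ as the zero set of a smooth map on relative-state space and exploit the dichotomy ``non-degenerate critical orbit (case (b)) versus tangentially degenerate critical orbit (case (a))'' that already underlies Theorems~\ref{thm:LgSOn} and~\ref{prop:SO3:1}. Using right-invariance I fix $Q_1=I_3$ and work on the relative pair $(R_2,R_3):=(Q_2,Q_3)\in SO(3)^2$, so that $C_s$ reduces to the single point $(I_3,I_3)$ and $C_{o-e}$ to the zero set of the smooth map
\[
\Phi(R_2,R_3;E) \;=\; \bigl(R_2^T E_{12} y_{12}-E_{21}y_{12},\; R_3^T E_{13} y_{13}-E_{31}y_{13},\; R_3^T R_2 \, E_{23}y_{23}-E_{32}y_{23}\bigr),
\]
with $E=(E_{12},\ldots,E_{32})$. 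Each component lies in a two-dimensional hyperplane, so $\Phi$ takes values in a $6$-dimensional space, matching the $6$ degrees of freedom of $(R_2,R_3)$.

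\textbf{Proof of (b).} In the setting of Theorem~\ref{prop:SO3:1}(b) the condition of Theorem~\ref{thm:LgSOn}(b) holds (Example~\ref{Ex:AllComps}), and this is precisely the statement that the differential $D_{(R_2,R_3)}\Phi$ at $(I_3,I_3;I)$ is an isomorphism. The implicit function theorem then supplies, for every $E\in E_\eps$ with $\eps$ small enough, a unique zero $(R_2(E),R_3(E))$ of $\Phi(\cdot,\cdot;E)$ in a neighborhood of $(I_3,I_3)$, depending smoothly on $E$. In particular $C_{o-e}(E)\ne\emptyset$ and $\dist(C_s,C_{o-e}(E))=O(\|E-I\|)\le c\eps$, which is (b).

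\textbf{Proof of (a).} In the coplanar setting, the geometric argument in the proof of Theorem~\ref{prop:SO3:1}(b) shows that the two circles $\{R_2 y_{23}:R_2\in\st(y_{12})\}$ and $\{R_3 y_{23}:R_3\in\st(y_{13})\}$ on $S^2$ are tangent rather than transverse at their common point $y_{23}$, so $D\Phi$ has a one-dimensional kernel at the origin while all other directions remain transverse. I then perform a Lyapunov--Schmidt reduction on $\Phi(\cdot;E)$: the five non-degenerate directions of $\Phi$ can be solved smoothly for a five-dimensional slice of $(R_2,R_3)$ in terms of the remaining scalar coordinate $u$ (along the degenerate direction) and $E$, reducing the system to a single bifurcation equation
\[
\alpha(E) \;+\; \beta(E,u)\, u^{2} \;=\; 0,
\]
where $\alpha(I)=0$, $\beta(I,0)\neq 0$ (this nondegeneracy is the content of ``tangential but not higher-order tangency'' of the two circles), and $u\mapsto \alpha(E)$ depends nontrivially on $E$ --- concretely, the gradient of $\alpha$ with respect to the $E_{ij}$ at $E=I$ is nonzero, because perturbing any one $E_{ij}$ translates one of the two tangent circles off the tangency point in a direction not tangent to the other. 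This last linearization is a direct calculation in a frame adapted to the common plane of $y_{12},y_{13},y_{23}$.

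\textbf{Conclusion of (a).} Once the reduced equation is in saddle-node form, the two claims follow at once. On the open set $S_\eps\subset E_\eps$ where $\alpha(E)/\beta(E,0)<0$ and $|\alpha(E)|\asymp\|E-I\|$, the equation admits exactly two solutions $u\sim\pm\sqrt{|\alpha(E)/\beta(E,0)|}=\Theta(\sqrt{\eps})$, hence $C_{o-e}(E)$ is nonempty with $\dist(C_s,C_{o-e})\ge c\sqrt{\eps}$ for some $c>0$ independent of $E\in S_\eps$; this uses that the other (non-degenerate) coordinates contribute only $O(\eps)$ to the distance. On the complementary open set $N_\eps$ where $\alpha(E)/\beta(E,0)>0$ and $|\alpha(E)|\asymp\|E-I\|$, the reduced equation has no real root near $0$, and a compactness argument (all zeros of $\Phi(\cdot;E)$ for small $E$ lie in a neighborhood of the unperturbed zero set $C_o = C_s$, otherwise a limit point would provide an unperturbed zero outside $C_s$) rules out far-away zeros, giving $C_{o-e}(E)=\emptyset$. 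Both $S_\eps$ and $N_\eps$ are open and have positive measure since they are defined by a strict inequality on a smooth submersion $\alpha$ of $E$.

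\textbf{Main obstacle.} The main technical work is the Lyapunov--Schmidt step in (a): one must identify the degenerate direction explicitly from the tangency of the two circles, verify that the quadratic coefficient $\beta(I,0)$ is nonzero (i.e.\ the tangency is of exact order two), and check that perturbations of the $E_{ij}$ generate both signs of $\alpha$. Once these three ingredients are in place, the $\sqrt{\eps}$ scaling and the alternative ``nonempty/empty'' are standard consequences of saddle-node unfolding.
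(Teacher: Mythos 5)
Your proposal is correct in substance but follows a genuinely different route from the paper. The paper works directly on the output sphere: it reduces membership in $C_{o-e}$ to the intersection of two explicit circles $Z_2,Z_3\subset S^2$ (the possible images of $y_{23}$ under agents $2$ and $3$), and then handles (a) by exhibiting explicit open sets of perturbations for which these circles either separate (giving $N_\eps$) or cross at points whose distance from the tangency point forces a stabilizer rotation $G_{3*}$ with $1-\cos\theta(\eps)>c_1\eps$, hence $\|I_3-G_{3*}\|_F\ge c_3\sqrt{\eps}$; case (b) is proved not by the implicit function theorem but by confining $Z_2,Z_3$ to $O(\eps)$-neighborhoods of the unperturbed circles and using the preserved ordering of their crossings with the boundary of a small ball to force a nearby transversal intersection. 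Your version packages exactly the same geometric dichotomy (tangent versus transversal circles) into the kernel/cokernel structure of $D\Phi$ and a saddle-node unfolding, which is shorter and makes the $\sqrt{\eps}$-versus-$\eps$ scaling conceptually transparent; the paper's version is more elementary and carries out in coordinates the two nondegeneracy checks you correctly isolate as the remaining work (the quadratic coefficient $\beta(I,0)\neq 0$, which indeed holds because two distinct circles on $S^2$ are the sphere's intersection with a line and so can only have second-order contact, and $\nabla_E\alpha(I)\neq 0$, which the paper realizes through the explicit inequalities defining $S_\eps$ and $N_\eps$). Three small points you should tighten: the reduced equation generically also carries a linear term $\gamma(E)u$ with $\gamma(E)=O(\eps)$, which does not change the conclusion since $\gamma^2=O(\eps^2)\ll|\alpha\beta|\asymp\eps$ but should be acknowledged; on $S_\eps$ you need $|\alpha(E)|\gtrsim\eps$ (not merely $|\alpha(E)|\asymp\|E-I\|$), so $S_\eps$ must be taken where $\|E-I\|$ is comparable to $\eps$; and the passage from the relative-state displacement $|u|$ to the quantity $\dist(C_s,C_{o-e})$, which is a minimum over a common right factor $Q$, requires the short triangle-inequality chain that the paper writes out explicitly. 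Your transpose convention in the third component of $\Phi$ also differs from the paper's constraint $Q_3Q_2^TE_{23}y_{23}=E_{32}y_{23}$, though this is immaterial to the argument.
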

\begin{proof}
	The simple but tedious proof, constructing specific sets, is in the appendix.
\end{proof}\vspace{2mm}

So in the setting of Theorem~\ref{prop:SO3:1}(a), the ratio between state synchronization error\linebreak $\dist(C_s,C_{o-e})$ and measurement error $\eps$ can grow infinite for $\eps \rightarrow 0$, while in the setting of Theorem~\ref{prop:SO3:1}(b) this ratio remains finite; in this sense only the second situation is robust. From geometric considerations (i.e.~set intersections, see proof of Theorem~\ref{prop:rob}), we also expect good robustness in the setting of Theorem~\ref{prop:SO3:1}(c).
Simulations suggest that this distinction --- $y_{ij}$ unrestricted robust, $y_{ij}$ restricted to relative positions not robust --- 
carries over to Corollary~\ref{cor:algo}.


\subsection{Numerical Simulations}\label{ssec:simus}

In this section we present some numerical simulations for $SO(3)$.

Figure~\ref{fig:2a} shows local evolution of $f_o$ and $f_s$ for $6$ all-to-all connected agents implementing~(\ref{alg:all}). The thick lines are for generic $y_{ij}$ (denoted B, analogous to case(b) of Theorems~\ref{prop:SO3:1} and~\ref{prop:rob}), thin lines are for $y_{ij}$ derived from generic positions $y_i \in \R^3$, that is $y_{ij} = \tfrac{y_i-y_j}{\pm \Vert y_i-y_j \Vert_2}$ (denoted A, analogous to case(a) of the theorems). 
Top plots are without measurement errors: for B, $f_o$ and $f_s$ exponentially decrease to machine precision,  for A the decay is clearly non-exponential. Note that the conditions of Theorem~\ref{thm:LgSOn} both fail for A, while they hold for B. 
Bottom plots repeat the simulation with random measurement errors $\Vert E_{ij} - I_3 \Vert_F \leq \eps= 0.01$.
As $C_{o-e}$ cannot be reached exactly, $f_{o-e}$ stabilizes between $10^{-4}$ and $10^{-3}$ for both cases.
Case A now actually \emph{saturates} at a significantly higher value of $f_s$ than B. This is reminiscent of Theorem~\ref{prop:rob}: for case(b) the synchronization error is of order $\eps$, for case(a) it can be of order $\sqrt{\eps} > \eps$. 
The difference between subexponential convergence and saturation for A, becomes apparent on exceedingly long simulations (displayed on the small plots). The setting with $y_{ij}$ given by relative positions seems ill-behaved in practice.

\begin{figure}[t]
\setlength{\unitlength}{1mm}
\begin{picture}(180,99)
\put(-10,-2){\includegraphics[width=145mm]{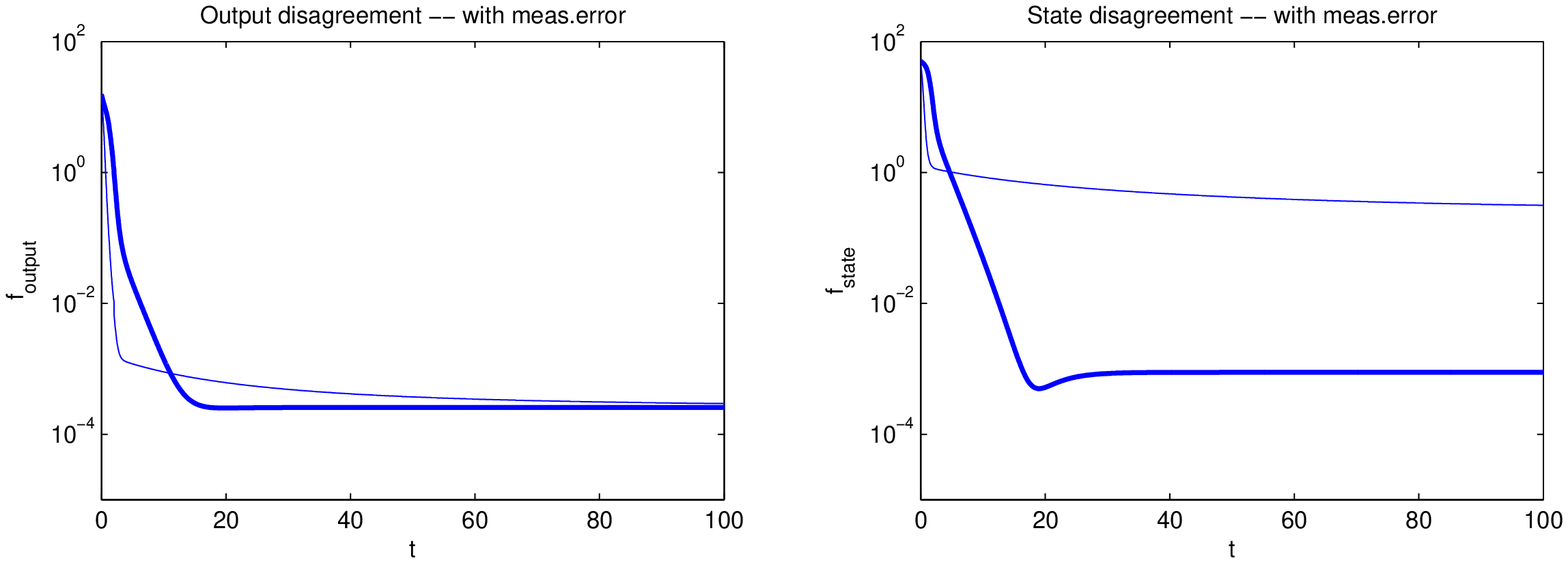}}
\put(-10,49){\includegraphics[width=145mm]{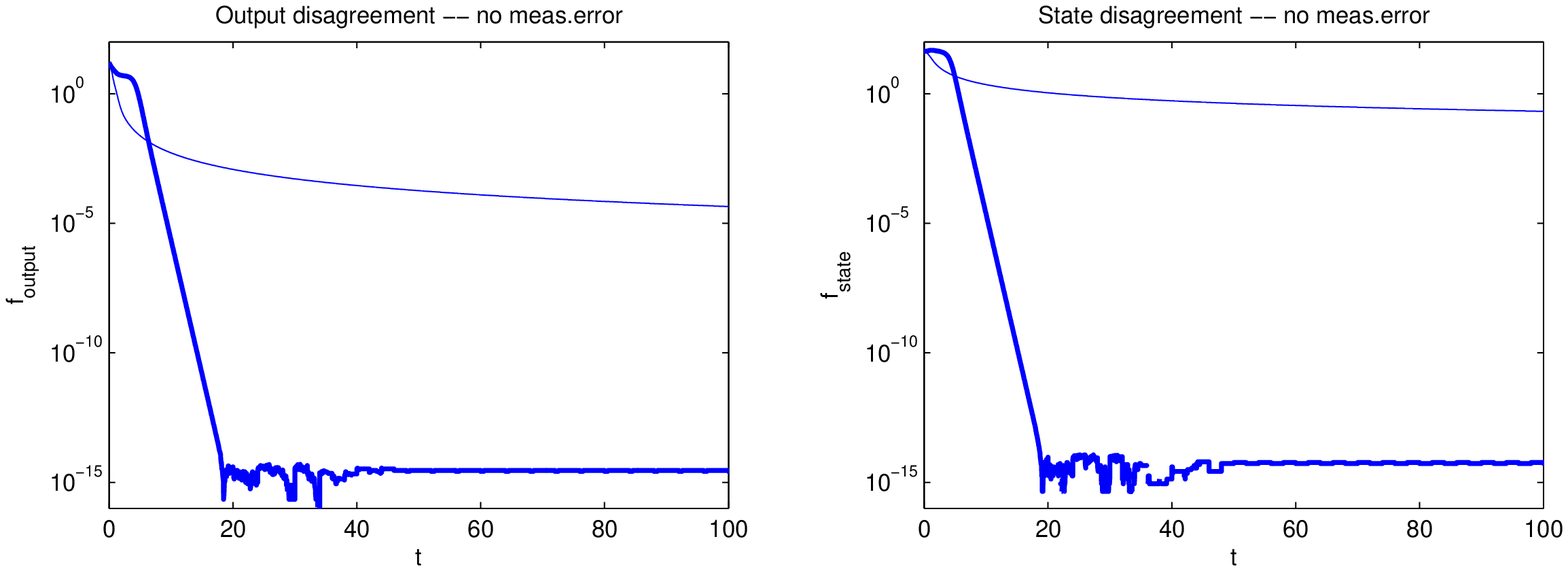}}
\put(122,20){\includegraphics[width=25mm]{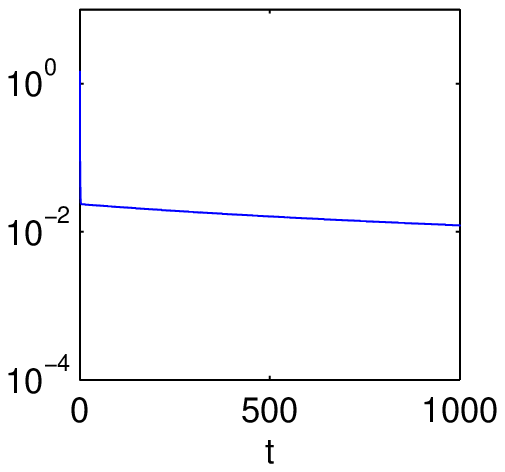}}
\put(122,71){\includegraphics[width=25mm]{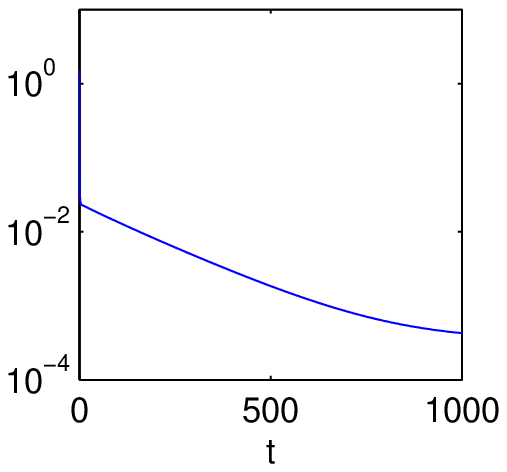}}
\end{picture}
\caption{Evolution of $f_{o-e}$ and $f_{s}$ for $6$ all-to-all connected agents, random initial conditions: thick lines generic $y_{ij}$ (like case(b) of Th.\ref{prop:SO3:1}), thin lines $y_{ij} = \tfrac{y_i-y_j}{\pm \Vert y_i-y_j \Vert_2}$ with generic $y_i$ (like case(a) of Th.\ref{prop:SO3:1}). Top plots without measurement errors, bottom plots with measurement errors. Far right plots: pursuing the simulation of case(a) to larger times.}\label{fig:2a}
\end{figure}

\begin{figure}[h!]
	\setlength{\unitlength}{1mm}
	\begin{picture}(160,45)
	\put(0,-3){\includegraphics[width=145mm]{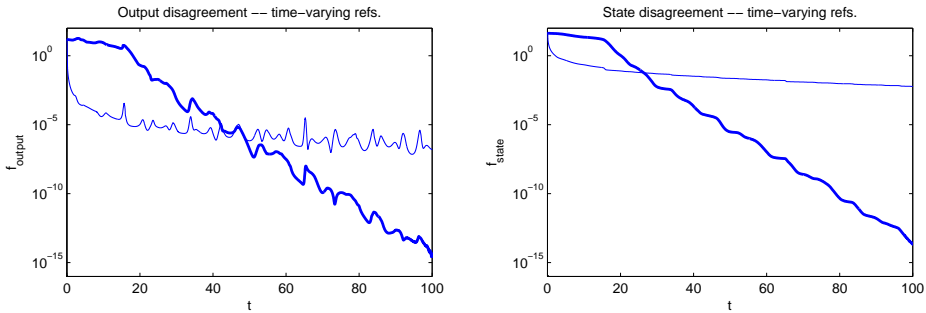}}
\end{picture}
	\caption{Evolution of $f_{o}$ and $f_{s}$ for $6$ all-to-all connected agents, random initial conditions and $y_{ij} = \tfrac{y_i-y_j}{\pm \Vert y_i-y_j \Vert_2}$ with time-varying $y_i(t)$. No measurement errors. Thick lines $\eps=0.1$ in (\ref{eq:tv}); thin lines $\eps=10$.}\label{fig:tv}
\end{figure}

Figure~\ref{fig:tv} illustrates behavior with time-varying $y_{ij}$. Again we consider $6$ all-to-all coupled agents (no measurement errors), with \refy vectors derived from generic positions in $\R^3$, that is $y_{ij} = \tfrac{y_i-y_j}{\pm \Vert y_i-y_j \Vert_2}$. The $y_i(t)$ vary quasi-periodically in time with frequencies in $[0.05,\tfrac{1}{2\pi}]$~Hz. For $\eps=0.1$ in~(\ref{eq:tv}), that is the thick lines, roughly exponential convergence to $f_o=f_s=0$ is observed, with oscillations from time-variation. Note that this $\eps$ is not exceedingly small w.r.t.~frequencies of $y_i(t)$. The value $\eps=10$ (thin line) is too large for exploiting the time-varying setting: corresponding curves look more like the ones for fixed $y_{ij}$ (thin lines on Figure~\ref{fig:2a}), converging to $0$ very poorly, if at all.

Finally, we illustrate the influence of graph $\G$ on convergence. Figure~\ref{fig:fin} shows local evolution of $f_o$ and $f_s$ for $7$ agents interconnected according to graphs A and B of Fig.~\ref{fig:graphs}, page~\pageref{fig:graphs}. Both runs use the same generic fixed $y_{ij}$ (except for the missing link in graph B) and same initial conditions in the neighborhood of $C_s$. The systems similarly converge to $f_o=0$ for both graphs. For graph A (thick lines), in accordance with Corollary~\ref{cor:algo}, $f_s$ also converges to $0$. For graph B (thin lines), $f_s$ converges to an arbitrary value, as $C_o \setminus C_s$ admits a continuum of relative states $(Q_2 Q_1^T,\ldots,Q_7 Q_1^T) \neq (I_3,\ldots,I_3)$.

\begin{figure}[ht]
		\setlength{\unitlength}{1mm}
		\begin{picture}(160,45)
		\put(0,-3){\includegraphics[width=145mm]{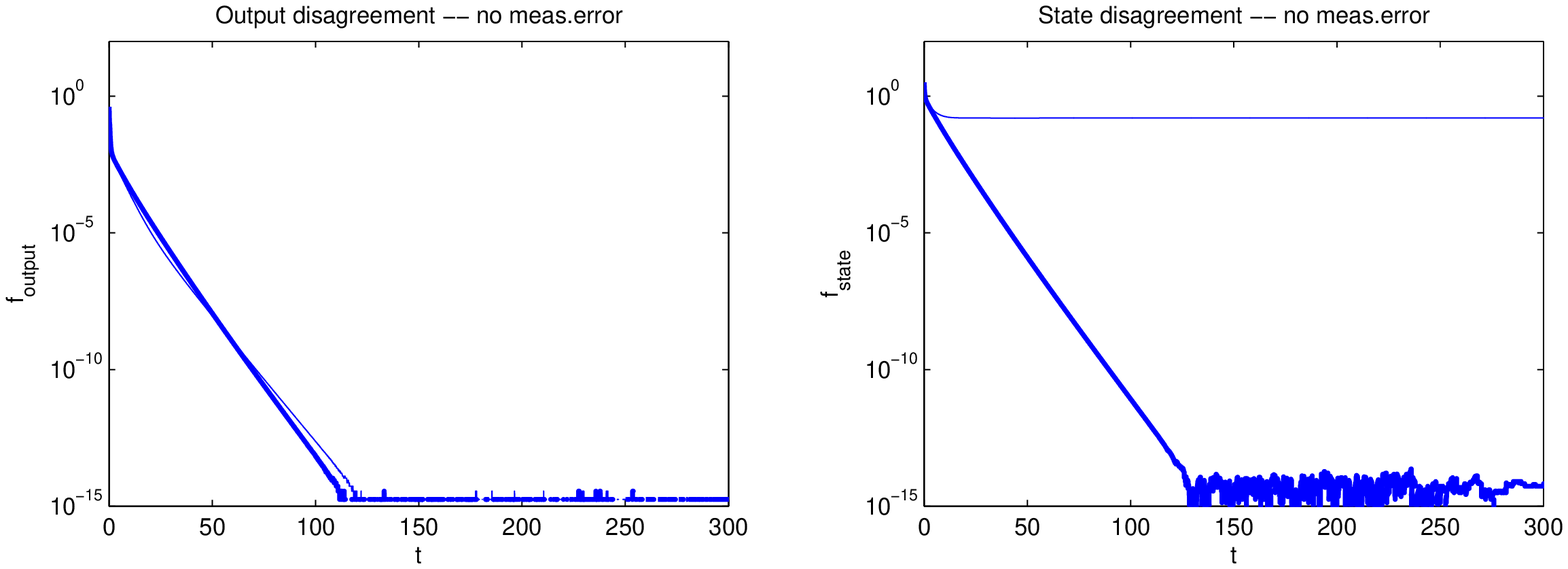}}
	\end{picture}
\caption{Evolution of $f_{o}$ and $f_{s}$ for $7$ agents with generic $y_{ij} \in S^2$ and initial conditions in the neighborhood of $C_s$, with interconnection graph of Fig.~\ref{fig:graphs}.A (thick lines) and of Fig.~\ref{fig:graphs}.B (thin lines). No measurement errors.}\label{fig:fin}
\end{figure}


\section{Conclusion}\label{sec:conc}

The present paper studies autonomous synchronization of orientation matrices $Q_i \in SO(n)$ with coupling through outputs $\neq$ states: information exchanged on each link is the expression of a vector in respective body frames. A natural gradient law generalizing full-state coupling case~\cite{ss:08} is proposed. Its convergence properties feature interesting specificities, even locally. These are studied in detail, with both necessary conditions and sufficient ones for state synchronization. An algorithm is proposed to find/build networks in which output synchronization implies state synchronization on $SO(3)$ for generic \refy vectors. The setting with \refy vectors derived as relative positions in $\R^3$ is shown to be non-robust. An analog system with states $x_i \in \R^n$ is studied for comparison.

We believe that this study can be both of practical interest --- to design synchronizing networks with less than full-state exchange --- and of theoretical interest --- illustrating non-robust cases, highlighting further differences between $\R^n$ and $SO(n)$, and maybe motivating a more detailed study of orbit intersections on homogeneous spaces and Lie groups. 
Regarding distributed systems, the non-trivial issues raised in this work motivate further study of observability-related theories for agents interacting in a network.


\section{Appendix}


\begin{lemma} \label{lem:BasicConv}
Consider an equilibrium $\, \bar{Q}:=(\bar{Q}_1,\ldots,\bar{Q}_k) \,$ of (\ref{alg:all}). 
Define 
$C_s^r := \{(X, \ldots, X) \in (\R^{n\times n})^k \, : X \in \R^{n \times n} \}$ and $F_{ij} = \bar{Q}_i^T M_{ij} \bar{Q}_j$. 
Build the symmetric matrix $L^{gg} = (L^{gg}_{ij}) \in\R^{kn\times kn}$ with
$L^{gg}_{ij} = - a_{ij} F_{ij} \text{ for } i \neq j \; \text{ and } \; L^{gg}_{ii} = {\textstyle \sum_j} \; a_{ij} \, F_{ij} .$
Then $S = \{ (\bar{Q}_1 Q^*,\ldots, \bar{Q}_k Q^*) : Q^* \in SO(n) \}$ is a set of equilibria, and $S$ is locally exponentially stable if and only if $(I_n \otimes L^{gg})$ is positive definite on the subspace of $\vect(\so (n)^k)$ orthogonal to $\vect(C_s^r)$. \newline 
(N.B.: the condition is \emph{necessary} only for \emph{exponential} stability.) 
\end{lemma}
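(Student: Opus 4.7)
First I would observe that $S$ consists of equilibria: the right-invariance of (\ref{alg:all}) under $(Q_1,\ldots,Q_k)\mapsto (Q_1 R,\ldots,Q_k R)$ with $R\in SO(n)$, noted just after (\ref{alg:all}), propagates the equilibrium $\bar Q$ to every element of $S$. The question of local exponential stability of the compact equilibrium set $S$ then reduces, by standard Lyapunov/gradient-flow theory on the Riemannian manifold $SO(n)^k$, to showing that the Hessian of $f_o$ at $\bar Q$, regarded as a quadratic form on the tangent space $T_{\bar Q}SO(n)^k$, is positive definite transverse to $T_{\bar Q}S$.

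To compute this Hessian, I would use left trivialization $Q_i=\bar Q_i\exp(\Omega_i)$ with $\Omega_i\in\so(n)$, so that $Q_i^T M_{ij} Q_j=\exp(-\Omega_i)F_{ij}\exp(\Omega_j)$, and Taylor-expand $f_o$ around $\Omega=0$. The first-order term is $\sum_i \tr\bigl(\Omega_i\sum_j a_{ij}F_{ij}\bigr)$; its vanishing for all skew $\Omega_i$ forces $L^{gg}_{ii}=\sum_j a_{ij}F_{ij}$ to be symmetric at $\bar Q$. Together with $F_{ji}=F_{ij}^T$ (an immediate consequence of $M_{ij}=M_{ji}$), this yields $L^{gg}_{ji}=(L^{gg}_{ij})^T$ and therefore $L^{gg}$ is symmetric, justifying the wording of the statement. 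The second-order term is
\begin{equation*}
H(\Omega)=\sum_{(i,j)\in\E}\Bigl[-\tfrac{1}{2}\tr(\Omega_i^2 F_{ij})-\tfrac{1}{2}\tr(F_{ij}\Omega_j^2)+\tr(\Omega_i F_{ij}\Omega_j)\Bigr].
\end{equation*}
The crucial algebraic step is to verify
\begin{equation*}
2H(\Omega)=\vect(\tilde\Omega)^T(I_n\otimes L^{gg})\vect(\tilde\Omega),\qquad \tilde\Omega=(\Omega_1;\ldots;\Omega_k),
\end{equation*}
which follows from expanding $\tr(\tilde\Omega^T L^{gg}\tilde\Omega)=\sum_{i,j}\tr(\Omega_i^T L^{gg}_{ij}\Omega_j)$, pairing the contributions of $(i,j)$ and $(j,i)$ for $i<j$, and using $F_{ji}=F_{ij}^T$ together with $\Omega_i^T=-\Omega_i$ to collapse these into the unordered-edge contributions of $H$.

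Finally I would identify the relevant subspaces. In left-trivialized coordinates $T_{\bar Q}SO(n)^k$ corresponds to $\vect(\so(n)^k)$, and differentiating the curve $Q^*=\exp(t\Omega)$ in $S$ shows that $T_{\bar Q}S$ corresponds to the diagonal $\{(\Omega,\ldots,\Omega):\Omega\in\so(n)\}=\vect(\so(n)^k)\cap \vect(C_s^r)$. A short check (using that a sum of skew matrices that is symmetric must vanish) shows the orthogonal complement of this diagonal inside $\vect(\so(n)^k)$ equals $\vect(\so(n)^k)\cap \vect(C_s^r)^\perp$, i.e.\ precisely the subspace in the statement. Since the linearization of the gradient flow on the normal bundle to $S$ is $-H$, exponential stability of $S$ is equivalent to $H$ being positive definite on this subspace, which by the key identity is exactly positive-definiteness of $I_n\otimes L^{gg}$ there. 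The main obstacle is the Hessian bookkeeping in the middle paragraph: correctly pairing sums over ordered pairs and unordered edges while tracking the asymmetry $F_{ij}\neq F_{ji}$; once the identity $F_{ji}=F_{ij}^T$ is exploited, both the symmetry of $L^{gg}$ and the quadratic-form identity fall out and the Lyapunov argument becomes routine.
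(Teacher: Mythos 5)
Your proposal is correct and follows essentially the same route as the paper: both compute the Hessian of $f_o$ at $\bar Q$ along left-trivialized curves $Q_i=\bar Q_i\exp(t\Omega_i)$, identify it with the quadratic form of $I_n\otimes L^{gg}$ on $\vect(\so(n)^k)$ (your pairing of ordered pairs via $F_{ji}=F_{ij}^T$ checks out), identify $T_{\bar Q}S$ with the diagonal copy of $\so(n)$ and its orthogonal complement with the subspace in the statement, and conclude by linearizing the gradient flow. The only substantive difference is that where you invoke ``standard Lyapunov/gradient-flow theory'' to pass from positive definiteness transverse to $T_{\bar Q}S$ to exponential stability of the whole equilibrium set $S$, the paper makes this step rigorous by quotienting by the right $SO(n)$-action onto the reductive homogeneous space $SO(n)^k/C_s$ with the normal Riemannian metric, verifying that the projected vector field is still a gradient, and then applying the standard point-equilibrium criterion there.
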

\begin{proof}
At a critical point $x$ the Hessian quadratic form $H_f(x)\colon (T_x \M \times T_x \M) \rightarrow \R$ 
of a smooth function $f\colon \M\rightarrow\R$ on a smooth manifold $\M$ is fully defined by $H_f(x) (v,v):= \frac{d^2}{dt^2}(f\circ \gamma)(0)$ for any smooth curve $\gamma(t)$ on $\M$ with $\gamma(0)=x$ and $\tfrac{d}{dt}\gamma\vert_0=v$, see \cite{hm}.
On a Riemannian manifold one can define at critical points\footnote{For other points one would have to use the Riemannian connection.} the self-adjoint Hessian operator $\mathtt{H}_f(x)\colon T_x\M\rightarrow T_x\M$ by 
$\langle v, \mathtt{H}_f(x) v\rangle = H_f(x) (v,v)$ for $v\in T_x\M$ with $\langle\cdot,\cdot\rangle$ the Riemannian metric. 

Take $\gamma\colon\R\rightarrow (SO(n))^k$ with $\gamma(0) = (\bar{Q}_1,\ldots,\bar{Q}_k)$ and $\frac{d}{dt}\gamma(0) = (\bar{Q}_1 \Omega_1, \ldots , \bar{Q}_k \Omega_k) =: \bar{Q} \bar{\Omega} \, \in T_{\bar{Q}}(SO(n))^k$, with $\Omega_i \in \so(n)$; also denote $\bar{\Omega}=(\Omega_1, \ldots, \Omega_k) \in \R^{n \times nk}$. 
From (\ref{PartialCost}),
$$H_{f_o}(\bar{Q}) (\bar Q\bar{\Omega},\bar Q\bar{\Omega}) = \tr(\bar{\Omega} L^{gg} \bar{\Omega}^T) = (\vect(\bar{\Omega}^T))^T (I_n \otimes L^{gg}) (\vect(\bar{\Omega}^T)) \; .$$
Right-invariance of (\ref{alg:all}) implies that $S$ is a set of equilibria on which $f_o = f_o(\bar{Q})$ is constant.
Moreover, curves with tangent vector in $\widetilde{Q} (\so (n)^k \cap C_s^r) = \{(Q_1\Omega_b,\ldots, Q_k\Omega_b) : \Omega_b \in \so(n) \}$ at any point $\widetilde{Q} = (Q_1,\ldots,Q_k) \in (SO(n))^k$ follow invariance directions of the dynamics. 
In particular, $S$ is a submanifold generated by all of these curves that pass through $\bar{Q}$.
The condition on $(I_n \otimes L^{gg})$ ensures that $H_{f_o}(\bar{Q}) (\bar Q\bar{\Omega},\bar Q\bar{\Omega}) = 0$ implies $\bar{Q}\bar{\Omega} \in \bar{Q}(\so (n)^k \cap C_s^r) = T_{\bar{Q}} S$. 

As in the proof of Theorem~\ref{thm:tv} we project the dynamics onto the reductive homogeneous space $\M = (SO(n))^k/C_s$ and
denote by $\pi\colon (SO(n))^k\rightarrow \M$ the canonical projection. 
The function $f_o$ is constant on the fibers of $\pi$ and induces a smooth function $\hat f_o$ on $\M$ which satisfies $f_o = \hat f_o\circ \pi$. Since \eqref{alg:all} is invariant under right multiplication of all $Q_i$ by the same $Q\in SO(n)$, it induces a vector field $X$ on $\M$.
The set $S$ is collapsed by $\pi$ onto a single equilibrium $s\in \M$ and it is locally exponentially stable under (\ref{alg:all}) iff $s$ is locally exponentially
stable under $X$. We equip $\M$ with the induced normal Riemannian metric $\langle \cdot,\cdot\rangle_\M$ and denote by $\mathfrak{h}(\widetilde Q)$, $\widetilde Q\in(SO(n))^k$, 
the canonical horizontal distribution on $(SO(n))^k$, i.e.~the orthogonal complement of the vertical distribution 
$\mathfrak{v}(\widetilde Q) := \ker (T_{\widetilde Q} \pi)= \widetilde{Q} (\so (n)^k \cap C_s^r)$ w.r.t. $\langle \cdot,\cdot\rangle_\M$.
By the invariance of $f_o$ under the action of $C_s$ on $(SO(n))^k$ by right multiplication, 
the vertical part $(\grad f_o(\widetilde Q))^{\mathfrak{v}}\in \mathfrak{v}(\widetilde Q)$ of $\grad f_o(\widetilde Q)$ is zero. 
Recall that for the normal metric, $\langle V,W \rangle_{(SO(n))^k} = \langle T_{\widetilde Q} \pi(V), T_{\widetilde Q} \pi(W) \rangle_\M$ for all $\widetilde Q\in (SO(n))^k$, $V, W \in \mathfrak{h}(\widetilde Q)$~\cite{ce}. 
Some calculations, using properties of the normal metric and the definition of the gradient, show that
the projected vector field $X(\pi(\widetilde Q))$ is actually $\grad_{\M} \hat f_o(\pi(\widetilde Q))$, the gradient of $\hat f_o$ w.r.t.~the normal metric.
So $\grad f_o$ projects to a gradient system on $\M$. 
Therefore, the equilibrium point $s \in \M$ is locally exponentially stable if and only
if the Hessian form of $\hat f_o$ at $s$ is strictly positive definite (this follows directly from standard results on exponential stability and the fact that the Hessian operator $\mathtt{H}_{\hat f_o}$ at a critical point corresponds to linearization of the vector field in exponential charts).
Simply using the definition of the Hessian form $H_{\hat f_o}$ and lifting curves from $\M$ to $SO(n)^k$ horizontally,
we see that $H_{\hat f_o}$ is strictly positive definite at $s$ if and only if $H_{f_o}$ is strictly positive definite on 
$\mathfrak{h}(\bar Q)$.
But from the above, $H_{f_o}$ is strictly positive definite on $\mathfrak{h}(\bar Q)$ iff $(I_n \otimes L^{gg})$ is positive definite on the subspace of $\vect(\so (n)^k)$ orthogonal to $\vect(C_s^r)$.
\end{proof}

The following result is used to prove Theorem~\ref{prop:rob}. Let $h\colon SO(3)\rightarrow S^2$ given by $h(Q)=Q^T y$ for $y \in S^2$ and denote $h^{-1}(p) = \{Q : Q^T y = p\}$ for $p \in S^2$.
\begin{proposition}\label{prop:pertfibercut}
If $y,z \in S^2$ are linearly independent, then there is a neighborhood $\mathcal{U}$ of $I_3$ in $SO(3)$ such that 
for all $p\in S^2$, $R\in \mathcal{U}$, $S\in SO(3)$, the set $ h^{-1}(p) \cap R \st(z) S$ contains at most one element.
\end{proposition}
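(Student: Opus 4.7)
The plan is to translate the intersection into a pair of image-vector constraints on a single rotation, and then exploit the rigidity of $SO(3)$. First I would fix any $Q_0\in SO(3)$ with $Q_0^T y=p$, so that $h^{-1}(p)=\st(y)\,Q_0$; writing $A\in h^{-1}(p)\cap R\st(z)S$ as $A=X Q_0 = R Y S$ with $X\in\st(y)$ and $Y\in\st(z)$, the stabilizer conditions $Xy=y$ and $Yz=z$ become
\[
A\,p = y, \qquad A\,q = r,
\]
where $q:=S^{-1}z\in S^2$ and $r:=Rz\in S^2$. So counting elements of $h^{-1}(p)\cap R\st(z)S$ reduces to counting rotations in $SO(3)$ that simultaneously send $p\mapsto y$ and $q\mapsto r$.

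Next I would invoke the standard rigidity property of $SO(3)$: a rotation is determined by its image on any pair of linearly independent vectors, since preservation of inner products and orientation then forces the action on the orthogonal complement. Hence as soon as $\{p,q\}$ is linearly independent there is at most one admissible $A$, and this generic case requires no hypothesis on $R$. The only obstruction arises in the degenerate configuration $q=\pm p$ (both are unit vectors), in which the two equations collapse to the single condition $r=\pm y$, i.e.~$Rz\in\{y,-y\}$.

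The main point of the proof will be handling this degeneracy, and the key observation is that the offending relation depends solely on $R$, not on $p$ or $S$. I would therefore choose $\mathcal{U}$ as an open neighborhood of $I_3$ on which $Rz\notin\{y,-y\}$ for every $R\in\mathcal{U}$; such a $\mathcal{U}$ exists by continuity of $R\mapsto Rz$ combined with the hypothesis that $y,z$ are linearly independent, which forces $z\neq\pm y$ and hence $\dist(\{z\},\{y,-y\})>0$. For $R\in\mathcal{U}$ the degenerate case then admits no solution at all, so combined with the previous paragraph the intersection contains at most one element for every $p\in S^2$ and every $S\in SO(3)$, establishing the claim.
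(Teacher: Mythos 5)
Your proof is correct, but it organizes the argument differently from the paper. The paper first right-translates the coset: from $h^{-1}(p)S^TR^T = h^{-1}(RSp)$ and $R\st(z)R^T=\st(Rz)$ it obtains $h^{-1}(p)\cap R\st(z)S=\left(h^{-1}(RSp)\cap\st(Rz)\right)RS$, and then observes that $X\mapsto X^Ty$ is injective on the subgroup $\st(Rz)$ once $y$ and $Rz$ are linearly independent --- a condition that defines $\mathcal{U}$ and requires no case distinction on $p$. You instead keep the coset as it stands, extract from $A\in h^{-1}(p)\cap R\st(z)S$ the two image constraints $Ap=y$ and $A(S^{-1}z)=Rz$, and invoke the rigidity of a rotation determined by its action on two linearly independent vectors; the price is the degenerate configuration $S^{-1}z=\pm p$, which you correctly dispose of by choosing $\mathcal{U}$ so that $Rz\neq\pm y$. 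Note that the two choices of $\mathcal{U}$ coincide (for unit vectors, $Rz\neq\pm y$ is exactly linear independence of $Rz$ and $y$), and the underlying rigidity fact is the same in both arguments --- a rotation of $\R^3$ fixing two linearly independent vectors is the identity. The difference is thus one of bookkeeping: the paper's normalization places the independence requirement on the fixed pair $(y,Rz)$ and thereby avoids any case split, whereas your version makes explicit that the only possible failure mode is the collapse $S^{-1}z=\pm p$, which is arguably the more transparent geometric picture.
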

\begin{proof}
Note that $R \st(z) R^T = \st(R z)$.
One checks that $h^{-1}(p)S^T R^T= h^{-1}(R S p)$.
Thus $\left(h^{-1}(p) \cap R \st(z) S\right) = \left(h^{-1}(RSp) \cap \st(Rz) \right) RS$.
Given linearly independent $z,y$, there is a neighborhood $\mathcal{U}$ of $I_3$ in $SO(3)$ 
such that $y$ and $v := R z$ remain linearly independent for all $R\in \mathcal{U}$.
This linear independence implies that equation $X^T y = RS p$ has at most one solution $X\in\st( v ) = \st( R z )$ for all $p\in S^2$.
\end{proof} \vspace{2mm}

\noindent \textbf{Proof of Theorem~\ref{prop:rob}:} 
$C_{o-e}$ is invariant under right multiplication of the states by a common constant, so (to simplify notations) we can characterize $C_{o-e}$ by first assuming $Q_1=I_3$ and at the end reintroducing a common right-multiplication.
A (hypothetical) state $(I_3,Q_2,Q_3)$ in the perturbed output consensus set $C_{o-e}$ corresponds to 
$Q_2 E_{12} y_{12} = E_{21} y_{12}$, $Q_3 E_{13} y_{13} = E_{31} y_{13}$ 
and $Q_3 Q_2^T E_{23} y_{23} = E_{32} y_{23}$.
The first two constraints yield $Q_2\in E_{21} \st(y_{12}) E_{12}^T$ and $Q_3\in E_{31} \st(y_{13}) E_{13}^T$.
The output of agent $2$ for $y_{23}$
is then contained in the circle
$$
Z'_2 = \{
E_{12} Q  E_{21}^T E_{23} y_{23} : Q \in  \st(y_{12})  \}
= \{ y\in S^2 : y_{12}^T E_{12}^T 
y = y_{12}^T E_{21}^T E_{23} y_{23} \}.
$$
Similarly, output of agent $3$ for $y_{23}$ 
is contained in the circle
$$
Z'_3 = \{
E_{13} Q  E_{31}^T E_{32} y_{23} : Q \in  \st(y_{13})  \}
= \{ y\in S^2 : y_{13}^T E_{13}^T   
y = y_{13}^T E_{31}^T E_{32} y_{23} \}.
$$
For a state in $C_{o-e}$, these two outputs must be equal, so they must belong to $Z'_2 \cap Z'_3$. Since the pairs $y_{12},y_{23}$ and $y_{13}, y_{23}$ are each linearly independent, each point $p \in Z'_2 \cap Z'_3$ corresponds to exactly one state of the form $(I_3,Q_2,Q_3) \in C_{o-e}$ for sufficiently small perturbations $\Vert E_{ij} - I_3 \Vert_F$, by Proposition~\ref{prop:pertfibercut} (existence of the state is ensured by definition of $Z'_2$ and $Z'_3$). Moreover, $y_{12},y_{13}$ linearly independent ensures that $Z'_2 \cap Z'_3$ can contain at most two points for sufficiently small $\Vert E_{ij} - I_3 \Vert_F$.
Let us introduce $G_3 := E_{31}^T Q_3 E_{13}$, $G_2 := E_{13}^T E_{12} E_{21}^T Q_2 E_{13}$, $\bar{y}_{12} = E_{13}^T E_{12} y_{12}$, $\bar{y}_{23} = E_{31}^T E_{32} y_{23}$ and $E_A = E_{13}^T E_{12} E_{21}^T E_{23} E_{32}^T E_{31}$. A state being in $C_{o-e}$ then requires $(G_{2},G_{3}) \; \in \; \st(\bar{y}_{12}) \times \st(y_{13})$.
 Further, define
\begin{align*}
Z_2 &:= E_{13}^T Z_2' = \{y \in S^2 : \bar{y}_{12}^T y = \bar{y}_{12}^T (E_A \bar{y}_{23})\}\; , \quad
Z_3 &:= E_{13}^T Z_3' = \{y \in S^2 : y_{13}^T y = y_{13}^T \bar{y_{23}} \} \; .
\end{align*}
Again, there is a bijection between points in $Z_2\cap Z_3$ and states in $C_{o-e}$ with $Q_1=I_3$.
We also define the balls
\begin{align*}
B_2 := \{y \in S^2 : \bar{y}_{12}^T y \geq \bar{y}_{12}^T (E_A \bar{y}_{23})\}\; , \quad
B_3 := \{y \in S^2 : y_{13}^T y \geq y_{13}^T \bar{y}_{23} \}
\end{align*}
whose boundaries are $Z_2$ resp.~$Z_3$. Furthermore, we denote by $Z_{2o}, Z_{3o}$ the circles and $B_{2o}, B_{3o}$ the balls for the unperturbed case, i.e.~for $E_{ij}=I_3$. We restrict ourselves to the case $0 < y_{12}^T y_{13} < y_{12}^T y_{23} \leq y_{13}^T y_{23}$; the proof for other cases is strictly analogous.
In addition, we redefine $\e_1 := \tfrac{\bar{y}_{12} \times y_{13}}{\Vert \bar{y}_{12} \times y_{13} \Vert_2}$ and $\hat{y}_{23} = \tfrac{(I_3 - \e_1 \e_1^T) \bar{y}_{23}}{\Vert (I_3 - \e_1 \e_1^T) \bar{y}_{23} \Vert_2}$.

From invariance of the Frobenius distance, $\Vert I_3 - J_2 \Vert_F \leq \Vert I_3 - J_1 \Vert_F + \Vert I_3 - J_1^T J_2 \Vert_F$ for any $J_1,J_2 \in SO(3)$. This allows to show that for $(E_{12},\ldots,E_{32}) \in E_\eps$,
\begin{equation}\label{eq:niceprop}
\Vert Q - J_E \Vert_F \leq m \eps \text{ if } J_E \text{ is the product of } Q \text{ with } m \text{ error rotations } E_{ij}
\end{equation} 
arbitrarily split into $m_1$ left and $m_2$ right multiplications, $m_1+m_2=m$.
Also, $x_1^T (I_3 - J_1) x_2 \leq \Vert I_3 - J_1 \Vert_F$ for $J_1 \in SO(n)$, $x_1,x_2 \in S^2$.

(a) For Theorem \ref{prop:SO3:1}(a), circles $Z_{2o}$ and $Z_{3o}$ intersect at a single tangency point $y_{23}$; then (in our restricted case, else balls must be redefined) the same is true for balls $B_{2o}$ and $B_{3o}$.\newline
First consider the second assertion of the theorem. For any $\eps>0$
we can find a $\beta>0$ and an open subset $N_\eps\subset E_\eps$ of $(SO(3))^6$ such that 
$y_{13}^T \bar{y}_{23} > y_{13}^T y_{23}$, $\bar{y}_{12}^T E_A \bar{y}_{23} > y_{12}^T y_{23} + \beta \;$ and 
$\bar{y}_{12}^T y_{12} \geq 1- \tfrac{\beta}{2}$ for all perturbations $(E_{12},\ldots,E_{32}) \in N_\eps$. 
Then $Z_2$ and $Z_3$ are in the interior of $B_{2o}$ and $B_{3o}$ respectively, so they cannot intersect and $C_{o-e} = \emptyset$.

Now consider the first assertion.
For any $\eps>0$ sufficiently small, we can find an $\alpha > 0$ and an open subset $N_\eps\subset E_\eps$ of $(SO(3))^6$ such that 
\begin{align*}
&(i) && y_{13}^T \bar{y}_{23} < y_{13}^T E_A \bar{y}_{23} - \alpha \eps , \\
&(ii) && \bar{y}_{12}^T \bar{y}_{23} > \bar{y}_{12}^T E_A \bar{y}_{23} + \alpha \eps \;\;\; \text{ and } \\
&(iii) && \Vert \bar{y}_{23} - \hat{y}_{23} \Vert_2 < \tfrac{\alpha}{2} \eps
\end{align*}
for perturbations $(E_{12},\ldots,E_{32}) \in N_\eps$. \newline
From (i), $E_A \bar{y}_{23}$, on the boundary of $B_{2}$, also belongs to interior of $B_{3}$, so $B_2 \cap B_3$ has non-zero measure and $Z_2 \cap Z_3$ contains two points, corresponding to two separated sets in $C_{o-e}$.
Take a state $(Q_1^*,Q_2^*,Q_3^*) \in C_{o-e}$ with corresponding $(G_{2*},G_{3*})$ and $y_* = G_{3*} \bar{y}_{23} = G_{2*} E_A \bar{y}_{23}$. 
Then (ii) implies $\bar{y}_{12}^T y_* = \bar{y}_{12}^T G_{3*} \bar{y}_{23} = \bar{y}_{12}^T E_A \bar{y}_{23} < \bar{y}_{12}^T \bar{y}_{23} - \alpha \eps$ so $\bar{y}_{12}^T (I_3 - G_{3*}) \bar{y}_{23} > \alpha \eps$ and using (iii) we get $\bar{y}_{12}^T (I_3 - G_{3*}) \hat{y}_{23} > \tfrac{\alpha}{2} \eps$.
Writing $G_{3*}$ and vectors in orthonormal basis $(\e_1,y_{13},\e_1 \times y_{13})$ shows that this requires $G_{3*}$ to be a rotation of 
angle $\theta(\eps)$ with $(1-\cos(\theta(\eps))) > c_1\eps$ for some constant $c_1$ independent of $\eps$; then there is a constant $c_2>0$ independent of $\eps$ such that $\theta(\eps) > c_2 \sqrt{\eps}$.
Thus we can find, for $\eps$ sufficiently small, a constant $c_3 > 0$ independent of $\eps$ such that 
$\Vert I_3 - G_{3*} \Vert_F = \sqrt{8} \sin(\tfrac{\theta(\eps)}{2}) > c_3\sqrt{\eps}$.
Using property (\ref{eq:niceprop}) and
\begin{multline*}
\dist(C_s,C_{o-e}) =
\min_{\substack{Q\in SO(n) \\ (I_3,Q_2,Q_3)\in C_{o-e}}} \left( \|Q-I_3\|^2_F + \|Q-Q_2\|^2_F + \|Q-Q_3\|_F^2\right)^{1/2}\\
\ge \tfrac{1}{\sqrt{3}} \min_{\substack{Q\in SO(n) \\ (I_3,Q_2,Q_3)\in C_{o-e}}} \|Q-I_3\|_F + \|Q-Q_2\|_F + \|Q-Q_3\|_F \\
\ge \tfrac{1}{\sqrt{3}} \min_{\substack{Q\in SO(n) \\ (I_3,Q_2,Q_3)\in C_{o-e}}} \|Q-I_3\|_F + \|Q-Q_3\|_F
\ge \tfrac{1}{\sqrt{3}} \min_{(I_3,Q_2,Q_3)\in C_{o-e}} \|I_3-Q_3\|_F 
\end{multline*}
we get, for sufficiently small $\eps$, a constant $c_4>0$ independent of $\eps$ such that 
$$
\dist(C_s,C_{o-e}) \ge \tfrac{1}{\sqrt{3}}\Vert I_3 - Q_{3*} \Vert_F \geq
\tfrac{\Vert I_3 - G_{3*} \Vert_F - \Vert G_{3*}- Q_{3*} \Vert_F}{\sqrt{3}} \geq
\tfrac{1}{\sqrt{3}}\Vert I_3 - G_{3*} \Vert_F  - 2 \eps \geq c_4 \sqrt{\eps} \, .
$$ 

(b) Denote by $\theta$ the angle between $y_{23}$ and $y_{12}$ i.e.~$y_{12}^T y_{23}=\cos(\theta)$, with $\theta \in (0,\tfrac{\pi}{2}]$ since we consider $y_{12}^T y_{23}>0$.
Define $B_{\alpha} = \{ y \in S^2 : \Vert y - y_{23} \Vert_2 \leq \alpha \}$.
Theorem~\ref{prop:SO3:1}(b) requires $Z_{2o}$ and $Z_{3o}$ to intersect at two points. This requires their intersections to be transversal. As a consequence, there exist $\beta_0,\gamma_0 > 0$ such that $\alpha < \gamma_0$ implies: (i) following the boundary of $B_{\alpha}$, one passes through 4 points $p_{2a},p_{3a},p_{2b},p_{3b}$ in that order, with $\{ p_{2a},p_{2b} \} \subset Z_{2o}$ and $\{ p_{3a},p_{3b} \} \subset Z_{3o}$; and (ii) on each boundary arc $[p_i,p_j]$, there is at least one point $r_{[p_i,p_j]}$ whose distances to $Z_{2o}$ and $Z_{3o}$ are both strictly larger than $\beta_0 \alpha$.
Next, note that there can be no point belonging to $Z_2$ (resp.~$Z_3$) whose distance to $Z_{2o}$ (resp.~to $Z_{3o}$) is larger than $\tfrac{4 \eps}{\sin(\theta/2)}$. Indeed, denote $x \in S^2$ a point at distance $\delta$ from $Z_{2o}$ in $\mathbb{R}^3$, and denote $\phi \in [0,\pi]$ the angle between vectors $x$ and $y_{12}$; then $\vert \cos(\theta) - \cos(\phi) \vert \geq \delta \sin(\tfrac{\theta+\phi}{2}) \geq \delta \sin(\tfrac{\theta}{2})$. At the same time, $x \in Z_2$ requires $\bar{y}_{12}^T (x-E_A \bar{y}_{23})=0$ hence
\begin{eqnarray*}
	\vert \cos(\theta) - \cos(\phi) \vert & = & \vert y_{12}^T (x - y_{23}) \; - \; \bar{y}_{12}^T (x-E_A \bar{y}_{23}) \vert \\
	& = & \vert y_{12}^T (I_3-E_{12}^T E_{13}) x + y_{12}^T (I_3 - E_{21}^T E_{23}) y_{23} \vert \\
	& \leq & \Vert y_{12} \Vert_2 (\Vert I_3 - E_{12}^T E_{13} \Vert_F \Vert x \Vert_2 + \Vert I_3 - E_{21}^T E_{23} \Vert_F \Vert y_{23} \Vert_2) \; \leq \; 4 \eps \, .
\end{eqnarray*}
These two inequalities yield $\delta < 4\eps / \sin(\tfrac{\theta}{2})$. A similar argument applies for $Z_3$.

As $Z_2$ and $Z_3$ are thus confined to neighborhoods of $Z_{2o}$ and $Z_{3o}$, take $\alpha < \gamma_0$ and $\eps = \tfrac{\beta_0 \alpha \sin(\theta/2)}{4}$. Then the intersection points of $Z_{2}$ and $Z_3$ with the boundary of $B_{\alpha}$ must remain confined in their respective arcs delimited by the $r_{[p_i,p_j]}$, e.g.~$Z_2$ has one intersection with the boundary of $B_{\alpha}$ on the arc between $r_{[p_{2a}, p_{3a}]}$ and $r_{[p_{3b}, p_{2a}]}$, similarly for the others; thus the order of intersections along the boundary --- alternating between $Z_2$ and $Z_3$ --- is conserved. This implies that $Z_2$ and $Z_3$ have an intersection point $y_* \in B_{\alpha}$, characterizing a state $(I_3,Q_{2*},Q_{3*}) \in C_{o-e}$ associated to $(G_{2*},G_{3*}) \; \in \; \st(\bar{y}_{12}) \times \st(y_{13})$ and $y_* = G_{3*} \bar{y}_{23} = G_{2*} E_A \bar{y}_{23}$. Then using property (\ref{eq:niceprop}),
\begin{multline*}
\dist(C_s,C_{o-e}) =
\min_{\substack{Q\in SO(n) \\ (I_3,Q_2,Q_3)\in C_{o-e}}} \left( \|Q-I_3\|^2_F + \|Q-Q_2\|^2_F + \|Q-Q_3\|_F^2\right)^{1/2}\\
\leq \min_{\substack{Q\in SO(n) \\ (I_3,Q_2,Q_3)\in C_{o-e}}} \|Q-I_3\|_F + \|Q-Q_2\|_F + \|Q-Q_3\|_F \\
\leq \|I_3-Q_{2*}\|_F + \|I_3-Q_{3*}\|_F \; \leq \; \|I_3-G_{2*}\|_F + \|I_3-G_{3*}\|_F + 6 \eps \, .
\end{multline*}
By the choice of $\alpha$ and $B_{\alpha} \ni y_*$, we have $\Vert y_*-y_{23} \Vert_2 \leq \tfrac{4\eps}{\beta_0 \sin(\theta/2)}$. Moreover, as $G_{3*}$ is a rotation around $y_{13}$, we have $\|I_3-G_{3*}\|_F = \tfrac{\sqrt{2}}{r_{3*}} \Vert y_*-\bar{y}_{23} \Vert_2$ with $r_{3*} = \Vert y_{13} \times \bar{y}_{23} \Vert_2\,$;
similarly $\|I_3-G_{2*}\|_F = \tfrac{\sqrt{2}}{r_{2*}} \Vert y_*-E_A \bar{y}_{23} \Vert_2$ with $r_{2*} = \Vert \bar{y}_{12} \times (E_A\bar{y}_{23}) \Vert_2\,$. There exist $\beta_1,\gamma_1 > 0$ such that $\eps < \gamma_1$ implies $\min(\Vert \bar{y}_{12} \times (E_A\bar{y}_{23})\Vert_2,\; \Vert y_{13} \times \bar{y}_{23} \Vert_2) > \sqrt{2} \beta_1$. Then
\begin{eqnarray*}
\dist( C_s , C_{o-e} ) \leq \frac{\Vert y_* - \bar{y}_{23} \Vert_2 + \Vert y_* - E_A\bar{y}_{23} \Vert_2}{\beta_1} + 6 \eps
\leq  \frac{2 \Vert y_* - y_{23} \Vert_2 + 6\eps}{\beta_1} + 6\eps \; \leq \; \alpha_2 \eps  
\end{eqnarray*}
for small enough $\eps$, with $\alpha_2 = 6+\tfrac{1}{\beta_1} \, (6 + \tfrac{8}{\beta_0 \, \sin(\theta/2)})$ finite.
\hfill $\Box$


\bibliographystyle{plain}
\bibliography{OutConsArxiv}

\end{document}